\newtheorem{thm}{Theorem}[section]
\crefname{thm}{theorem}{theorems}
\Crefname{thm}{Theorem}{Theorems}
\newaliascnt{lemma}{thm}
\newtheorem{lemma}[lemma]{Lemma}
\newaliascnt{prop}{thm}
\crefname{prop}{proposition}{propositions}
\Crefname{prop}{Proposition}{Propositions}
\newaliascnt{cor}{thm}
\crefname{cor}{corollary}{corollaries}
\Crefname{cor}{Corollary}{Corollaries}
\theoremstyle{definition}
\newaliascnt{rem}{thm}
\newtheorem{rem}[rem]{Remark}
\crefname{rem}{remark}{remarks}
\Crefname{rem}{Remark}{Remarks}
\newaliascnt{example}{thm}
\newtheorem{example}[example]{Example}
\newaliascnt{definition}{thm}
\newtheorem{definition}[definition]{Definition}
\newaliascnt{war}{thm}
\crefname{war}{warning}{warnings}
\Crefname{war}{Warning}{Warnings}
\newaliascnt{block}{thm}
\newtheorem{block}[block]{}
\crefname{block}{}{}
\Crefname{block}{}{}
\newaliascnt{conv}{thm}
\crefname{conv}{convention}{conventions}
\Crefname{conv}{Convention}{Conventions}
\newaliascnt{notation}{thm}
\crefname{notation}{notation}{notations}
\Crefname{notation}{Notation}{Notations}
\newaliascnt{notconv}{thm}
\newtheorem{notconv}[notconv]{Notation and conventions}
\crefname{notconv}{}{}
\Crefname{notconv}{}{}
\newtheorem*{ack}{Acknowledgements}
\numberwithin{equation}{section}
\crefname{subsection}{subsection}{subsections}
\Crefname{subsection}{Subsection}{Subsections}
\newcommand{\C}{\mathbb{C}}
\newcommand{\N}{\mathbb{N}}
\newcommand{\Z}{\mathbb{Z}}
\newcommand{\R}{\mathbb{R}}
\newcommand{\W}{\mathcal{W}}
\newcommand{\V}{\mathcal{V}}
\newcommand{\A}{\mathcal{A}}
\newcommand{\E}{\mathcal{E}}
\newcommand{\X}{\tilde X}
\newcommand{\set}[2]{\left\{ #1 \,\middle\vert\, #2 \right\}}
\renewcommand{\epsilon}{\varepsilon}
\newcommand{\mynd}[4]
{
    \begin{figure}[#2]\begin{center}
        \ifpdf
            \input{#1.pdf_t}
        \else
            \input{#1.pstex_t}
        \fi
		\ifthenelse{\equal{#3}{}}
        {
		}
		{
            \caption{#3\label{#4}}
        }
    \end{center}
    \end{figure}
}
\newcounter{dummy}
\renewcommand{\thedummy}{\roman{dummy}}
\newenvironment{blist}
{
  \begin{list}{(\thedummy)}
  {
	\setlength\labelsep{4pt}
	\setlength\itemindent{4pt}
	\setlength\leftmargin{0pt}
	\setlength\labelwidth{0pt}
    \usecounter{dummy}
  }
}
{
  \end{list}
}
\author{Baldur Sigur\dh sson}
\title{The boundary of the Milnor fiber of the singularity
$f(x,y) + zg(x,y) = 0$.}
\date{\today}
\begin{document}

\begin{center}
{ \bf \large
The boundary of the Milnor fiber of the singularity $f(x,y) + zg(x,y) = 0$
}
\vspace{0.3cm}

{Baldur Sigur\dh sson
\footnote{Baldur Sigur\dh sson, Basque Center for Applied Mathematics,
Bilbao, Spain; \tt{bsigurdsson@bcamath.org}}
}
\end{center}



\begin{abstract}
Let $f,g\in\C\{x,y\}$ be germs of functions defining plane curve
singularities without common components
in $(\C^2,0)$ and let $\Phi(x,y,z) = f(x,y) + zg(x,y)$. We give an
explicit algorithm producing a plumbing graph for the boundary
of the Milnor fiber of $\Phi$ in terms of a common resolution for
$f$ and $g$.
\end{abstract}

\section{Introduction}

It is known that the boundary of any hypersurface singularity in
$(\C^3,0)$ is a plumbed manifold.
This was stated by Michel and Pichon in \cite{Mich_Pich} and proved
by separate methods by N\'emethi and Szil\'ard \cite{Nem_Szil} and
Michel and Pichon \cite{Mich_Pich_car}.
A stronger statement for certain
real analytic map germs was proved by de Bobadilla and Neto \cite{Boba_Neto}.
As these theorems rely on resolution of singularities, they do
not easily provide an explicit description of a plumbing graph describing
the boundary. Calculations have been carried out, however for some
particular singularities and families:
for Hirzebruch singularities \cite{Mich_Pich_Web_Hirz}, suspensions
of isolated plane curves \cite{Mich_Pich_Web_susp} and in the
many examples of \cite{Nem_Szil}.

In the case of a hypersurface singularity given by the equation
\[
  \Phi(x,y,z) = f(x,y) + zg(x,y) = 0,
\]
where $f,g$ are singular germs with
no common factors (but not necessarily reduced), we give an explicit
algorithm producing a plumbing graph for the boundary of the Milnor fiber
in terms of the graph associated with an embedded resolution of
the plane curve singularities defined by
$f$ and $g$. This is obtained from an explicit
description of the Milnor fiber by the author \cite{Sigurdsson2016}.
Singularities of the form $f(x,y) + zg(x,y)$ are closely related to
the deformation theory of sandwitched singularities, see
\cite{SANDWICH}.
The article is organized as follows.

In \cref{s:Milnor_fiber} we recall the results of \cite{Sigurdsson2016}
and fix notation related to the resolution graph of $f$ and $g$.

In \cref{s:plumbing} we define plumbed manifolds and prove some
useful lemmas related to them.

In \cref{s:mult_def} we introduce families of multiplicities
and dual multiplicities assigned to a complex valued function on
a plumbed $3$-manifold, satisfying certain conditions. In the case
of a fibration over $S^1$, these multiplicities coincide with the
multiplicities used in \cite{EisNeu,Nem_Szil}.

In \cref{s:cont_frac} we prove a useful lemma relating the negative
continued fraction expansion of a rational number to a plumbing
construction.

In \cref{s:constr} and \cref{s:mult} we provide the details of the construction
of the plumbing graph for the boundary of the Milnor fiber of
$\Phi$ and the families of multiplicities and dual multiplicities
for the coordinate function $z$. These statements can be read
after only reading \cref{def:Gamma}.

In \cref{s:examples} we provide some examples. First we give the
simple plumbing graph describing boundary of the Milnor fiber of
a $T_{a,b,*}$ singularity given by the equation $x^a + y^b + xyz = 0$.
This example is discussed in \cite{Nem_Szil} 22.2.

\Cref{s:proofs} contains proofs of \cref{thm:boundary} and \cref{thm:mult}.

\begin{notconv} \label{notconv:notation}
\begin{blist}
\item \label{it:notation_disks}
We denote by $D\subset \C$ the open unit disk and by $\overline D$
the closed unit disk. We also set $S^1 = \partial \overline D$.
For any $r>0$, let $D_r,\overline D_r, S^1_r$ be the corresponding disks
and circle with radius $r$.

\item \label{it:notation_fclass}
If $X$ is a manifold, and $C\subset X$ is a submanifold of dimension
$d$, then we denote by $[C]\in H_d(M,\Z)$ the associated homology
class.
If $X$ is a compact oriented compact manifold, possibly with
boundary, we denote by $(\cdot,\cdot)_X$ the intersection pairing
between $H_i(X,\Z)$ and $H_{n-i}(X,\partial X,\Z)$, where
$n = \dim X$.
In particular, if $\partial X = \emptyset$ and $i = n/2$,
then $(\cdot,\cdot)_X$ is the intersection form on the middle
homology.

\item \label{it:notation_orientation}
The boundary of an oriented manifold is oriented by the usual
\emph{outward-pointing-vector first} rule. Note that if a codimension
one submanifold $N\subset M$
splits $M$ into two pieces, this rule induces
opposite orientations according to which piece is chosen.

\item \label{it:notation_bundles}
A locally trivial differential fiber bundle with a chosen orientation
on the total space and the base space induces an orientation on each fiber
by the following requirement. A lifting of a positive basis of the tangent
space of the base space, followed by a positive basis of the tangent
space of the fiber yields a positive orientation of the total space.
In fact, this rule induces an orientation on the fibers, the total
space or the base space, given orientations on the other two.

\end{blist}
\end{notconv}

\begin{ack}
I would like to thank N\'emethi Andr\'as for suggesting this problem to
me and for the many helpful discussions we have had.
\end{ack}

\section{The Milnor fiber} \label{s:Milnor_fiber}

In \cite{Sigurdsson2016} the author gives a description of the Milnor fiber of
the singularity $f(x,y) + z g(x,y) = 0$. We will now recall that result
and fix some notation.

\begin{definition} \label{def:Gamma}
Let $\phi:V\to\C^2$ be a common resolution of the functions $f$ and $g$
with exceptional divisor $E$, decomposing into irreducible components
as $E = \cup_{v\in\W} E_v$ and
denote by $\Gamma$ the associated embedded resolution graph.
The set of vertices in $\Gamma$
is $\V = \W\amalg \A$, where $\W$ corresponds to components of the
exceptional divisor, while elements of $\A$ are \emph{arrowheads},
corresponding to components of the strict transforms of $f$ and $g$.
For any $a\in\A$ there is a $w_a\in\W$
so that $\{w_a,a\}$ is an edge in $\Gamma$.
Write $\A = \A_f \amalg \A_g$, where elements of
$\A_f$ and $\A_g$ correspond to components of the strict transform
of $f$ and $g$.
For $v\in\V$, we denote by $m_v$ and $l_v$ the multiplicities of $f$ and
$g$, respectively. In particular, $m_v = 0$ if and only if $v\in\A_g$
and, similarly, $l_v = 0$ if and only if $v\in\A_f$.

Define $\W_1 = \set{w\in\W}{m_w\leq l_w}$ and
$\W_2 = \set{w\in\W}{m_w> l_w}$.
 Write
$\A_i = \set{a\in\A}{w_a\in\W_i}$ for $i=1,2$. Similarly, take
$\A_{f,i},\A_{g,i}\subset\A_i$ so that $\A_f = \A_{f,1}\amalg \A_{f,1}$
and $\A_g = \A_{g,1}\amalg \A_{g,2}$.
\end{definition}

\begin{definition} \label{def:neighbourhoods}
For $w\in\W$, let $T_w$ be a tubular neighbourhood around $E_w$ in $V$
and let $T = \cup_{w\in\W} T_w$. Set also $T_i = \cup_{w\in\W_i} T_w$
for $i=1,2$.
For a given $0 < \epsilon \ll 1$, let
$F_f = f^{-1}(\epsilon)$ be the Milnor fiber of $f$, and
$F'_f = \phi^{-1}(F_f)$ its pullback to $V$. Let $T_\epsilon$ be a small
tubular neighbourhood around $F'_f$ in $T$.
We also choose tubular neighbourhoods $T_a \subset T$ around $E_a$ for
any $a\in\A$.
With these choices
fixed, choose a small tubular neighbourhood $T' = \cup_{w\in\W} T'_w$
around the exceptional divisor inside $T$. This is chosen small enough that
$T' \cap T_\epsilon = \emptyset$.

Set $T_{f,i} = \cup_{a\in\A_{f,i}} T_a$, and
$T_{g,i} = \cup_{a\in\A_{g,i}} T_a$ for $i=1,2$ and let
\[
  \overline T_{f,g}
    = (\overline T_{f,1} \setminus T')
    \cup \overline T_\epsilon
    \cup (\overline T_2 \setminus (T'\cup T_{g,2})).
\]
\end{definition}

\mynd{scheme}{ht}
{A schematic picture showing $\overline T_{f,g}$. The Milnor fiber of $f$
is shown as a dotted curve. The Milnor fiber of $\Phi$ is obtained by
twisting along the strict transform of $g$.}
{im:scheme} 

\begin{definition}
Let $X$ be a four dimensional manifold with boundary and
$\iota: \overline D \to X$ an embedding of the closed disk into $X$
such that $\iota$ sends $S^1=\partial \overline D$ to $\partial X$
and the image of $\overline D$ is transversal to $\partial X$.
Then there exists a map $\psi:\overline D\times \overline D \to X$
parametrizing a tubular neighbourhood of $\iota (D)$ in $X$
so that $\psi(0,z) = \iota(z)$ for $z\in\overline D$ and
$\psi(x,z) \in\partial X$ for $x\in\overline D$ and
$z\in S_1 = \partial D$.
For $k\in\Z$, the \emph{$k^{\mathrm{th}}$ twist} along
$\iota$ is defined as
$(X\setminus\psi(D\times\overline D))\amalg_{t_k}\overline D\times \overline D$
where the glueing map
$t_k:S^1\times \overline D \to (X\setminus\psi(D\times\overline D))$ is
defined by $t_k(x,z) = \psi(x,x^k z)$ and is
denoted by $X_{\iota,k}$.
We also say that $X_{\iota,k}$ is obtained from $X$ by \emph{twisting
$X$ $k$ times along} $\iota(\overline D)$.
\end{definition}

\begin{definition} \label{def:T_f_g}
In \cite{Sigurdsson2016},
the author shows that for any $a\in A_g$, the intersection
$E_a\cap \overline T_{f,g}$ is a disjoint union of $m_{w_a}$ disks embedded
in $T_{f,g}$. Let $F_{f,g}$ be the manifold obtained from $\overline T_{f,g}$
by twisting each of these disks $l_a$ times for all $a$.
\end{definition}

\begin{thm}[\cite{Sigurdsson2016}] \label{thm:whole_fiber}
The Milnor fiber $F_\Phi$ is diffeomorphic to $F_{f,g}$. \qed
\end{thm}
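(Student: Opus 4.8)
The plan is to realise the Milnor fiber $F_\Phi=\Phi^{-1}(\delta)\cap B_\epsilon$ — with $0<|\delta|\ll\epsilon\ll 1$ and $B_\epsilon\subset\C^3$ a ball about the origin — by means of the projection $p\colon\C^3\to\C^2$, $(x,y,z)\mapsto(x,y)$, transporting the whole analysis to the common resolution $\phi\colon V\to\C^2$. Over a point $q$ with $g(q)\neq 0$ the equation $f(q)+zg(q)=\delta$ has the single root $z=(\delta-f(q))/g(q)$, so $p$ restricts to a diffeomorphism of $F_\Phi\setminus p^{-1}(g^{-1}(0))$ onto the set of those $q$ for which $(q,(\delta-f(q))/g(q))\in B_\epsilon$; over $g^{-1}(0)$ one instead needs $f(q)=\delta$, and then $z$ is unconstrained, so that locus is a finite union of disks, one over each point of $\{g=0\}\cap\{f=\delta\}$. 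Accordingly the argument splits into: (a) identifying the image in $V$ of the ``$z$-free'' part of $F_\Phi$ and matching it with $\overline T_{f,g}$; (b) describing the part of $F_\Phi$ over $g^{-1}(0)$ and recognising how it is glued on as the twisting construction; (c) checking that the truncation by $\partial B_\epsilon$ introduces no spurious boundary.

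The heart of (a) is the local model near a component $E_v$ of the exceptional divisor. In coordinates $(u,t)$ with $E_v=\{u=0\}$ one has $f\circ\phi=u^{m_v}\eta$ and $g\circ\phi=u^{l_v}\xi$ with $\eta,\xi$ non-vanishing, so $\Phi=\delta$ reads $u^{m_v}\eta+zu^{l_v}\xi=\delta$. When $m_v\le l_v$ this forces $|u|\sim|\delta|^{1/m_v}$ with $z$ bounded, i.e.\ the region is a copy of a truncation of the Milnor fiber of $f$; when $m_v>l_v$ it forces $|u^{l_v}z|\sim|\delta|$, so that over $E_v$ the coordinate $z$ escapes to infinity along the strict transform of $g$ while, away from it, the slice is a trivial disk bundle over $E_v$. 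This dichotomy is exactly the splitting $\W=\W_1\amalg\W_2$ of \cref{def:Gamma}. After deleting the collar $T'$ of the exceptional divisor and, over $\W_2$, the neighbourhood $T_{g,2}$ of the strict transform of $g$ along which $z$ blows up, one should recover the three pieces $\overline T_{f,1}\setminus T'$ (the $f$-dominant region near the strict transform of $f$), $\overline T_\epsilon$ (a neighbourhood of the pulled-back Milnor fiber of $f$) and $\overline T_2\setminus(T'\cup T_{g,2})$ (the $z$-dominant region) that make up $\overline T_{f,g}$ of \cref{def:neighbourhoods}; one then verifies that on the pairwise overlaps these local identifications agree up to isotopy.

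For (b), along the strict transform $S$ of a branch of $g$ (an arrowhead $a\in\A_g$) the function $g\circ\phi$ has a simple transverse zero whereas $f\circ\phi$ does not vanish identically on $S$, so on $F_\Phi$ one has $z=(\delta-f\circ\phi)/(v\xi)$ with $v$ a coordinate transverse to $S$ and $\xi$ a unit; as $v$ encircles $0$ the coordinate $z$ acquires monodromy, which in $\overline T_{f,g}$ is distributed over the finitely many local sheets meeting $E_a$, and gluing these sheets back in with the appropriate framing is precisely the twisting of \cref{def:T_f_g}, the twist number $l_a$ being read off from the intersection data of the branch. Matching this concrete picture with the abstract twisting construction introduced just before \cref{def:T_f_g} — deciding which disk of $E_a\cap\overline T_{f,g}$ corresponds to which sheet, pinning down the framing coming from the normal bundle of $S$, and keeping all orientations consistent with the conventions of \cref{notconv:notation} — is the step I expect to be the main obstacle, since it is here that the combinatorics of the resolution graph is converted into the integer $l_a$. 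Finally, for (c) one appeals to the a priori estimates underlying Milnor's fibration theorem, now with the resolution in hand, to see that $\partial B_\epsilon$ meets $F_\Phi$ transversally in exactly the circles bounding $\partial F_{f,g}$; assembling the local diffeomorphisms from (a) and (b) by uniqueness of tubular neighbourhoods and the isotopy extension theorem then yields the global diffeomorphism $F_\Phi\cong F_{f,g}$.
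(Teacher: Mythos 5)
Note first that this paper contains no proof of \cref{thm:whole_fiber}: the statement is imported from \cite{Sigurdsson2016} with a \emph{qed}, so the only thing to check your outline against is the way \cref{def:Gamma,def:neighbourhoods,def:T_f_g} are set up and the strategy of the cited reference. Your skeleton --- project $F_\Phi$ to $\C^2$ by forgetting $z$, pull back to the common resolution, sort exceptional components by whether $m_v\le l_v$ or $m_v>l_v$ to recover the three pieces of $\overline T_{f,g}$, and account for the full disks of $z$-values over $\{g=0\}\cap\{f=\delta\}$ via the twisting construction --- is exactly that strategy, and it is consistent with everything in \cref{s:Milnor_fiber}. Two cautions, though. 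A branch of $g$ need not be reduced, so near the strict transform $E_a$ one has $g\circ\phi=v^{l_a}\xi$ rather than the ``simple transverse zero'' $v\xi$ you write; this is precisely where the twist number $l_a$ of \cref{def:T_f_g} comes from, and the count of $m_{w_a}$ disks in $E_a\cap\overline T_{f,g}$ comes from $f|_{E_a}=\delta$ having $m_{w_a}$ local solutions, so this point cannot be elided. Second, in the $\W_2$ case the relation is $|u^{l_v}z|\sim|\delta-u^{m_v}\eta|$, not $|u^{l_v}z|\sim|\delta|$; it is the former that makes $z$ bounded away from $T'\cup T_{g,2}$ and unbounded on approach to $E_v$, and in the borderline case $m_v=l_v$ the factor $\eta+z\xi$ can be small, so the assertion that $|u|\sim|\delta|^{1/m_v}$ is forced needs an isotopy argument rather than a one-line estimate. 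The framing and orientation bookkeeping you explicitly defer in step (b) is indeed where the real content of the theorem lies, so as written this is a correct plan rather than a proof.
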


\begin{definition} \label{def:M}
Let $M_{f,g} = \partial F_{f,g}$. We also set
$M'_{f,g} = \partial \overline T_{f,g}$.
\end{definition}

\section{Plumbed 3-manifolds} \label{s:plumbing}

In this section we give an introduction to plumbed three manifods
and plumbing graphs, along with some useful properties. Throughout
this text, an $S^1$-bundle will mean a principal $S^1$-bundle. In particular,
we assume that there is a consistent choice for orientation on each fiber.
In fact, all our $S^1$-bundles will have as base space an \emph{oriented}
real surface. This determines a consistent choice of orientation on
fibers as described in \cref{notconv:notation}\cref{it:notation_orientation}.

We note that apart from our restriction on orientability, our definition
of a plumbed manifold is equivalent to the definition in \cite{Neu_plumb}.
This can be seen from \cref{lem:edge_signs}. We note, however, that our
construction differs slightly to the standard one. This is
explicated in \cref{rem:diff}.
The main reason for this is that in our construction in
\cref{s:proofs}, we identify the three dimensional plumbed pieces directly,
but the result can in no natural way be seen as the boundary
of a four dimensional plumbed manifold (as is the case for links
of isolated surface singularities).

\begin{definition} \label{def:plumbing}
A \emph{plumbed manifold} is a three dimensional compact manifold $M$,
possibly with boundary,
given as a union of submanifolds with boundary
$M = \cup_{v\in\W} M_v$ having the following properties.
\begin{enumerate}

\item
For each $v,w\in\W$, $v\neq w$ we have an $e_{v,w}\in\N$ so that
\[
  M_v\cap M_w = \amalg_{i=1}^{e_{v,w}} S_{v,w,i},
\]
with $S_{v,w,i}$ an embedded torus $M \supset S_{v,w,i} \cong S^1\times S^1$.
Thus, $S_{v,w,i}$ is a component of $\partial M_v$ and inherits an orientation.
Since $e_{v,w} = e_{v,w}$, we can assume that as sets, we have 
$S_{w,v,i} = S_{v,w,i}$ for $i=1, \dots, e_{v,w}$.

\item
For each $v$ we have a compact connected
surface $\Sigma_v$ (possibly with boundary) and a locally trivial
$S^1$ bundle $\pi_v:M_v\to\Sigma_v$. If $1\leq i \leq e_{v,w}$ for some
$w\neq v$, then $S_{v,w,i} = \pi^{-1}(B_{v,w,i})$ where
$B_{v,w,i} \cong S^1$ is a component of the boundary of $\Sigma$.

\item
Assume that $1\leq i \leq e_{v,w}$ for some $v\neq w$. The map
\[
  S_{v,w,i} \to B_{v,w,i} \times B_{w,v,i}, \quad
  p\mapsto (\pi_v(p), \pi_w(p))
\]
is a diffeomorphism.

\item
For each $v\in\V$, let $B_{v,1}, \ldots, B_{v,e_v}$ be the components
of $\partial \Sigma$ not of the form $B_{v,w,i}$ for some $w,i$. We assume
given a section $s_{v,i}:B_{v,i} \to \Sigma$ to the reduced
bundle $\pi_v|_{B_{v,i}}$.

\end{enumerate}
\end{definition}

\begin{block}
We orient $S_{v,w,i}$ by considering it as a subset of the boundary
of $M_v$.
This way, $S_{v,w,i} = S_{w,v,i}$ as sets, but
$S_{v,w,i} = -S_{w,v,i}$ as oriented manifolds.
We also orient the boundary of $\Sigma_w$ by the same rule, for any $w\in\W$.
\end{block}

\begin{block}
For a closed surface $\Sigma$, the \emph{Euler number} classifies
the $S^1$ bundles over $\Sigma$.
However, every $S^1$ bundle $\pi:M\to \Sigma$ over
a compact surface with nonempty boundary is trivial.
But given a trivialization, or, equivalently, a section
 $s:\partial \Sigma \to \partial M$, over the
boundary, a \emph{relative Euler number} is well defined, and invariant
under homotopy of the section.
This is a complete invariant in the following sense. Let 
$\Sigma$ be a compact surface with boundary and take two
$S^1$ bundles $M,M'\to \Sigma$ with sections $s:\partial\Sigma \to M$
and $s':\partial\Sigma \to M'$ and an isomorphism of bundles
$\psi:M|_{\partial\Sigma} \to M'|_{\partial\Sigma}$ sending $s$ to $s'$.
Then $\psi$ extends to an isomorphism of bundles $M\to M'$ if and only
if the relative Euler numbers coincide. We will refer to the relative
Euler number simply as the Euler number.

The relative Euler number is defined as follows. Let $D \subset \Sigma$ be
an open disk. We can extend the section $s:\partial \Sigma \to \partial M$
to a section $\overline s:\Sigma \setminus D \to M \setminus \pi^{-1}(D)$.
Given an orientation preserving diffeomorphism $\varphi:\partial D \to S^1$,
there is a unique number $b\in\Z$ so that the twisted section
$\overline s^b:\partial D \to \pi^{-1}(\partial D)$,
$x \mapsto \varphi(x)^b \overline s(x)$ extends over the disk $D$.
The relative Euler number is defined as $-b$.
\end{block}

\begin{lemma} \label{lem:triv}
Let $M\to \Sigma$ be an $S^1$ bundle over a compact surface with
boundary. Let $-b$ be its the Euler number relative to a section
$s:\partial\Sigma\to M$. Let $C\subset M$ be a fiber of the bundle
and $C'$ the image of $s$ (as \emph{oriented} submanifolds). Then, in
$H_1(M,\Z)$
\begin{equation} \label{eq:lem_triv}
   -b[C] = [C'].
\end{equation}
\end{lemma}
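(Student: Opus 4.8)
The statement relates the Euler number of an $S^1$-bundle over a surface with boundary to a homology relation in $H_1(M,\Z)$. The key structural fact is that $M$ is homotopy equivalent to a simpler space once we use the triviality of the bundle on a deleted disk. The plan is to compute $H_1(M,\Z)$ directly from the defining data of the relative Euler number, via a Mayer–Vietoris decomposition of $M$ into the part over $\Sigma\setminus D$ (where the bundle is trivial, with the section $\overline s$ giving the splitting) and the part over $\overline D$ (a solid torus $S^1\times\overline D$), glued along $\pi^{-1}(\partial D)\cong S^1\times S^1$.

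\medskip

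\textbf{Step 1: Set up the pieces.} Write $M = N \cup_{\mathbb{T}} W$, where $N = \pi^{-1}(\Sigma\setminus D)$, $W = \pi^{-1}(\overline D)\cong S^1\times\overline D$, and $\mathbb{T} = \pi^{-1}(\partial D)\cong S^1\times S^1$. On $N$ the bundle is trivial and $\overline s$ extends the section $s$; so $N \simeq (\Sigma\setminus D)\times S^1$ as a bundle, and in particular the extended section $\overline s$ gives a class in $H_1(N,\Z)$ restricting to $[C']$. Let $\mu$ denote the fiber class $[C]$, viewed in $H_1(\mathbb{T},\Z)$, $H_1(N,\Z)$, and $H_1(W,\Z)$; note $\mu$ generates $H_1(W,\Z)\cong\Z$. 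Let $\lambda = [\overline s(\partial D)]\in H_1(\mathbb{T},\Z)$ be the section curve over $\partial D$, with the orientation induced by $\varphi:\partial D\to S^1$.

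\medskip

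\textbf{Step 2: Identify the gluing.} By the very definition of the relative Euler number $-b$ in the block preceding the lemma, the twisted section $\overline s^{\,b}$, i.e. the curve $x\mapsto\varphi(x)^b\overline s(x)$, bounds a disk in $W$ (it extends over $D$). Hence in $H_1(\mathbb{T},\Z)$ the class that dies in $H_1(W,\Z)$ is $\lambda + b\mu$ (choosing signs/orientations so that the twist adds $b$ copies of the fiber), while $\mu$ survives and generates $H_1(W,\Z)$. Meanwhile in $H_1(N,\Z)$ the section curve $\lambda$ equals the restriction of the global extended section, and $\mu = [C]$ is the fiber class; these satisfy no relation coming from $N$ beyond those already in $H_1(N)$.

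\medskip

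\textbf{Step 3: Run Mayer–Vietoris and read off the relation.} Apply the Mayer–Vietoris sequence for $M = N\cup W$ with intersection $\mathbb{T}$:
\[
  H_1(\mathbb{T},\Z) \xrightarrow{\ (i_N,-i_W)\ } H_1(N,\Z)\oplus H_1(W,\Z) \xrightarrow{\ j\ } H_1(M,\Z) \to H_0(\mathbb{T},\Z) \to \cdots
\]
In $H_1(M,\Z)$ the images of classes from $N$ and $W$ are identified modulo the image of $H_1(\mathbb{T},\Z)$. The class $\lambda + b\mu\in H_1(\mathbb{T})$ maps to $(\,[C'] + b[C]\,,\ 0\,)$ under $(i_N, i_W)$ (using that $\lambda$ restricts to $[C']$ in $N$ and that $\lambda+b\mu$ dies in $W$). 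Therefore $[C'] + b[C] = 0$ in $H_1(M,\Z)$, which is exactly \cref{eq:lem_triv} after rearranging: $-b[C] = [C']$.

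\medskip

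\textbf{Main obstacle.} The delicate point is bookkeeping the orientations and signs: fixing the orientation of $C$ as a fiber (consistently with the $S^1$-action, per \cref{notconv:notation}\cref{it:notation_orientation,it:notation_bundles}), the orientation of $C'$ via the section $s$, the orientation of $\partial D$ via $\varphi$, and the sign convention "$-b$" in the definition of the relative Euler number — and then checking that all these combine so that the class becoming null-homologous in the solid torus $W$ is precisely $[C'] + b[C]$ and not $[C'] - b[C]$. Once the sign conventions are pinned down against the definition of $b$ in the preceding block, the homological computation is immediate.
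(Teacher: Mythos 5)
Your proof is correct and is essentially the paper's argument: both rest on extending the section over $\Sigma\setminus D$ (so that $[C']$ is homologous to the section curve $\lambda$ over $\partial D$) and on the fact that the $b$-twisted curve $\lambda+b\mu$ bounds over $D$, giving $[C']+b[C]=0$. The Mayer--Vietoris sequence is just packaging around this; the paper performs the same two homologies directly in $H_1(M,\Z)$.
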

\begin{proof}
This follows from the definition of the relative Euler number. Indeed, let
$\overline s :\Sigma\setminus D \to M$ be a section as above. It follows that
$[C'] - \overline s_*[\partial D] = 0$.
The sign comes from the fact that $\partial D$ is oriented as the boundary
of the disk $D$, which is the opposite to the orientation inherited from
the complement of the disk.
Since the section $\overline s^b_*$ extends
over $D$ and $D$ is null-homotopic, the map $\overline s^b:\partial D \to M$
is homotopic to a constant map $\partial D \to M$.
It follows that $\overline s_* [\partial D] = -b [C]$.
\end{proof}

\begin{rem}
If $\partial \Sigma \neq \emptyset$, then \cref{eq:lem_triv} can be taken
as an alternative definition of the (relative)
Euler number. Indeed, it follows from
the K\"unneth formula that the $[C]$ is not a torsion element
of $H_1(M,\Z)$.
\end{rem}

\begin{definition} \label{def:graph}
A \emph{plumbing graph} is a decorated graph $G$ (with no loops)
with vertex set $\V = \W \amalg \A$,
where each vertex $a\in \A$ has a unique neighbour $w_a$ and $w_a$.
We refer to vertices in $\A$ as \emph{arrowhead vertices}.
$G$ is decorated as follows.
\begin{itemize}
\item
For each $w\in \W$, we have integers $-b_w \in \Z$ and $g_v \in \Z_{\geq 0}$.
These are referred to as the associated \emph{Euler number} (or
sometimes \emph{selfintersection number}) and the \emph{genus}.

\item
Each edge $e$ connecting two vertices in $\W$ is given a sign
$\epsilon_e\in\{+,-\}$.
\end{itemize}

In a drawing of a graph, the genus $g_v$ is written within square
brackets as $[g_v]$ to be distinguished from the Euler number. If it
is omitted, it is assumed to be $0$.
A negative edge will be indicated by the symbol $\circleddash$, whereas
if indication is omitted, the sign is assumed to be positive.
An edge connecting $w\in \W$ and an arrowhead $a\in\A$ is drawn as a dashed
edge, see e.g. \cref{im:Bamboo}. 

Let $M = \cup_{v\in\W} M_i$ be a plumbed manifold and use the notation
introduced in \cref{def:plumbing}.
The \emph{associated
plumbing graph} $G$ has vertex set $\V = \W\amalg\A$ where
$\A = \amalg_{v\in\W} \A_v$, where the elements of $\A_v$ correspond to
the boundary components $B_{v,1}, \ldots, B_{v,e_v}$ of $\Sigma_v$. It has
$e_{v,w}$ edges
connecting $v$ and $w$ if $v,w$ are distinct elements of $\W$
and a single
edge connecting any $a\in\A_w$ with $w$ if $w\in \W$, and no other edges.
Denote by $\E$ this set of edges.

The genus $g_v$ is the genus
of the surface $\Sigma_v$. The Euler number $-b_v$ is the Euler
number of the $S^1$ bundle $M_v\to\Sigma_v$, trivialized on the boundary
components $B_{v,i}$ by the given section, and on the components
$B_{v,w,i}$ by any fiber of $S_{v,w,i} = S_{w,v,i}\to B_{w,v,i}$.

Any edge $e\in\E$ connecting $v,w\in\W$
corresponds to a component $S_{v,w,i} = S_{w,v,i}$
of the intersection $M_v\cap M_w$.
Take fibers $C_v$ and $C_w$ of $\pi_v$ and $\pi_w$, respectively, contained
in $S_{v,w,i}$.
The sign $\epsilon_e$ is defined as the intersection number
of $C_v$ and $C_w$ in $S_{v,w,i}$, that is,
\[
  \epsilon_e = ([C_v],[C_w])_{S_{v,w,i}}.
\]
It follows from definition that
this intersection number is $\pm 1$. This sign depends on the orientation
on $S_{v,w,i}$, which, we recall, is obtained by viewing $S_{v,w,i}$ as
a subset of $\partial M_v$.
\end{definition}

\begin{lemma} \label{lem:edge_signs}
Let $v,w$ be vertices connected by an edge $e$ in a plumbing graph
associated to a plumbed manifold $M$. Let $C_w$ be a fiber
of $\pi_w$ contained in the torus $S_{v,w,i}$ corresponding
to $e$. Then the sign $\epsilon_e$ is positive if and only if
$-C_w$ is an oriented section to the map
$S_{v,w,i}\to B_{v,w,i}$.
\end{lemma}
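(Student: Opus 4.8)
The plan is to unwind both "sign" definitions — the definition of $\epsilon_e$ from \cref{def:graph} and the definition of "oriented section" from \cref{notconv:notation}\cref{it:notation_bundles} — and show they match. First I would fix the torus $S = S_{v,w,i}$, which by \cref{def:plumbing} is diffeomorphic to $B_{v,w,i}\times B_{w,v,i}$ via $p\mapsto(\pi_v(p),\pi_w(p))$, so that $S$ carries two circle fibrations, one with fibers the $\pi_v$-fibers and one with fibers the $\pi_w$-fibers; a $\pi_w$-fiber $C_w$ is automatically a (set-theoretic) section of $\pi_v|_S:S\to B_{v,w,i}$, and the only question is its orientation. By \cref{def:graph}, $\epsilon_e = ([C_v],[C_w])_S$, where $C_v$ is a $\pi_v$-fiber, and $S$ is oriented as a component of $\partial M_v$.

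Next I would make the orientation bookkeeping explicit on $S\cong S^1\times S^1$. The bundle $\pi_v:M_v\to\Sigma_v$ orients the fiber $C_v$ from the orientations of $M_v$ and $\Sigma_v$ via the bundle rule in \cref{notconv:notation}\cref{it:notation_bundles}; restricting to the boundary component $B_{v,w,i}\subset\partial\Sigma_v$, the orientation of $S$ as a subset of $\partial M_v$ (outward-vector-first) together with the orientation of $C_v$ determines — again by the bundle rule applied to $\pi_v|_S:S\to B_{v,w,i}$ — an orientation on the base $B_{v,w,i}$, and this is exactly the boundary orientation of $B_{v,w,i}$ coming from $\Sigma_v$, because the outward normal to $S$ in $M_v$ projects to the outward normal to $B_{v,w,i}$ in $\Sigma_v$. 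Thus, writing $[C_v]$ and a class $[\beta]$ lifting the positively-oriented base $B_{v,w,i}$, we get $([C_v],[\beta])_S = +1$. Now $[C_w]$ and $[\beta]$ both project isomorphically to $[B_{v,w,i}]$ in $H_1(B_{v,w,i})$ up to adding a multiple of $[C_v]$, so $[C_w] = \pm[\beta] + k[C_v]$ in $H_1(S,\Z)$, and since $([C_v],[C_v])_S = 0$ we obtain $\epsilon_e = ([C_v],[C_w])_S = \pm 1$, with the sign being $+$ precisely when $[C_w]$ projects to $+[B_{v,w,i}]$ and $-$ when it projects to $-[B_{v,w,i}]$.

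Finally I would compare with the definition of "oriented section": $-C_w$ (equivalently $C_w$ with reversed orientation) is an \emph{oriented} section of $S\to B_{v,w,i}$ iff it projects orientation-preservingly, i.e. iff $[C_w]$ projects to $-[B_{v,w,i}]$, i.e. (by the previous paragraph) iff $\epsilon_e = +1$. This is exactly the claim. The one point requiring genuine care — the main obstacle — is the claim that the outward normal of $S$ in $M_v$ maps under $d\pi_v$ to the outward normal of $B_{v,w,i}$ in $\Sigma_v$, so that the boundary orientations and the bundle-rule orientations are compatible without an extra sign; this is where one must be scrupulous about the conventions in \cref{notconv:notation}\cref{it:notation_orientation,it:notation_bundles}, using that $\pi_v$ is a submersion and that a collar of $S$ in $M_v$ maps to a collar of $B_{v,w,i}$ in $\Sigma_v$. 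Everything else is formal manipulation in $H_1(S^1\times S^1,\Z)$.
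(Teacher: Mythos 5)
Your overall strategy is the same as the paper's: unwind the two orientation conventions (the boundary orientation of $S_{v,w,i}\subset\partial M_v$ and the base-then-fiber rule of \cref{notconv:notation}\cref{it:notation_bundles}) on the torus, and compare $[C_w]$ with the class of an oriented section of $\pi_v|_{S_{v,w,i}}$. The delicate point you flag --- that the outward normal of $S_{v,w,i}$ in $M_v$ projects to the outward normal of $B_{v,w,i}$ in $\Sigma_v$, so the bundle-rule orientation on $B_{v,w,i}$ agrees with its boundary orientation --- is handled correctly, and is indeed the crux; the paper does the same bookkeeping pointwise with tangent vectors.

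However, there is a sign error exactly where the content of the lemma lives. The convention says a positive basis of the total space is a lift of a positive basis of the \emph{base, followed by} a positive basis of the fiber; applied to $\pi_v|_S : S\to B_{v,w,i}$ this gives that $(\beta,c)$ is a positive basis of $T_xS$, where $\beta$ is tangent to an oriented section and $c$ is a positive tangent to the fiber $C_v$. Hence $([\beta],[C_v])_S=+1$, so $([C_v],[\beta])_S=-1$, not $+1$ as you assert. Your second paragraph's conclusion should therefore read: $\epsilon_e=+1$ precisely when $[C_w]$ projects to $-[B_{v,w,i}]$. Your final paragraph then states (correctly) that $-C_w$ is an oriented section iff $[C_w]$ projects to $-[B_{v,w,i}]$, and claims this is equivalent to $\epsilon_e=+1$ ``by the previous paragraph'' --- but your previous paragraph says the opposite ($\epsilon_e=+1$ iff $[C_w]$ projects to $+[B_{v,w,i}]$). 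The two sign slips cancel so that the final sentence is the true statement, but the argument as written is internally inconsistent. Once $([C_v],[\beta])_S=-1$ is corrected everything lines up, and your proof coincides with the paper's, which establishes $([C_v],[B])_{S_{v,w,i}}=-1$ for an oriented section $B$ by exactly this computation.
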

\begin{proof}
Let $C_v\subset S_{v,w,i}$ be a fiber of $\pi_v$.
We have $\epsilon_e = ([C_v],[C_w])_{S_{v,w,i}}$. Therefore, if $B$ is the
oriented image of some section of $\pi_v|_{S_{v,w,i}}$, then it suffices
to show that $([C_v],[B])_{S_{v,w,i}} = -1$.
By construction, $C_v$ and $B$ intersect in a single point, say
$x \in S_{v,w,i}$, and we can
assume that this intersection is transverse. Let
$c,b \in T_xS_{v,w,i}$ be tangent vectors
inducing positive bases of $T_x C_v$ and $T_x B_v$. Let $a\in T_x M_v$ be
an outward pointing tangent vector. By definition,
$(\pi_v(a), \pi_v(b))$ is a positive basis of $T_{\pi_v(x)} B_{v,w,i}$.
Therefore, $(a,b,c)$ is a positive basis of $T_x M_v$, and so
$(b,c)$ is a positive basis of $T_x S_{v,w,i}$. This means that
$([B],[C_v])_{S_{v,w,i}} = 1$ and so
$([C_v],[B])_{S_{v,w,i}} = -1$.
\end{proof}

\begin{rem} \label{rem:diff}
The above lemma may seem contrary to the usual definition of plumbing
\cite{Neu_plumb,Nem_Szil}. There, the authors start with $S^1$-bundles
over a closed surfaces. The glueing of two pieces, corresponding to an
edge $e$, is made by removing
a tubular neighbourhood around a fiber in each piece and identifying
the boundaries by switching meridians and fibers, multiplied with 
a sign $\epsilon_e$. The output of the two constructions is identical, but
the submanifold $B$ in the proof above, is a meridian, but with the opposite
orientation to that of a standard meridian.
\end{rem}

\begin{example} \label{ex:surf} \cite{Mumford,Nem_Szil}
Let $\X$ be a smooth complex surface and let
$E \subset \X$ be a compact \emph{normal crossing divisor}. This means that
$E$ is a compact reduced
analytic subspace of pure dimension one, decomposing
as $E = \cup_{v\in\V} E_v$ into irreducible components, with the condition
that each $E_v$ is a submanifold of $\X$, that each $E_v$ and $E_w$ intersect
transversally, and that any singularity of $E$ is a double point.
If $T\subset\X$ is a suitable small neighbourhood of $E$, then
$M = \partial T$ is a plumbed manifold, whose plumbing graph $G$ is given
by the intersection matrix of $E$, that is, $G$ has vertex set $\V$,
the genus $g_v$ is the genus of $E_v$, the Euler number $-b_v$ is the
selfintersection number $(E_v,E_v)$, equivalently, it is the Euler number
of the normal bundle of the embedding $E_v\hookrightarrow \X$, and
the number of edges between $v,w\in\V$ is the cardinality $|E_v\cap E_w|$.
Furthermore, $\epsilon_e = +$ for any edge $e$.
\end{example}

\begin{block}
A plumbed manifold $M$ can be recovered from its (decorated) plumbing graph $G$
as follows. As before, denote by $\V$ and $\E$ the set of vertices and edges
in $G$, and by $g_v$ and $-b_v$ the genus and
the selfintersecion number of a vertex $v$ and by $\epsilon_e$ the sign
of an edge. For each $v\in\V$, let $\Sigma_v$ be a compact surface of
genus $g_v$ with $e_v + \sum_{w\in\V\setminus\{v\}} e_{v,w}$ boundary
components, give names $B_{v,w,i}$, for $w\in\V\setminus\{v\}$ and
$1\leq e_{v,w}$ and $B_{v,i}$ for $1\leq i \leq e_v$. Let
$\pi_v:M_v \to \Sigma_v$
be an $S^1$ bundle with sections $s_{v,w,i}$ and $s_{v,i}$ over the boundary
inducing Euler number $-b_v$. The section $s_{v,w,i}$ induces a trivialization
$\phi_{v,w,i}:S^1\times S^1 \to \pi^{-1}_v(B_{v,w,i})$.

We then have
$M \cong \amalg_{v\in\V} M_v / \sim$ where $\sim$ is the equivalence
relation on $\amalg_{v\in\V} M_v$ generated by
$\phi_{v,w,i}(\theta_1,\theta_2)
\sim \phi_{w,v,i}(\theta_2^{-\epsilon_e},\theta_1^{-\epsilon_e})$
where $e$ is the $i^{\mathrm{th}}$ edge connecting $v$ and $w$.
The negative sign in the exponents in the glueing map is
explained by \cref{rem:diff}.

\end{block}



\section{Multiplicities associated with complex valued functions}
\label{s:mult_def}

In this section we give a definition of multiplicities of a
complex valued function on a plumbed manifold under some restrictions
(see \cref{block:mult}). This definition coincides with the multiplicities
associated with fibred links in section 18 of \cite{EisNeu}, if
the function is a fibration over $S^1$.
These multiplicities are useful as they can be obtained by local
computation, but can be used to determine Euler numbers,
see \cref{lem:top_ident}.

\begin{block} \label{block:mult}
Let $M = \cup_{v\in\V} M_v$ be a plumbed manifold with graph $G$, with
vertex set $\V = \W\cup\A$ and let
$\zeta:M\to\C$ be a differentiable function having $0$ as a regular value.
Furthermore, assume that $\zeta$ does not vanish on $\partial M_v$ for all
$v\in\W$.
Thus, $N_v = \zeta^{-1}(0) \cap M_v$ is a closed submanifold
of $M_v$ which does not intersect its boundary.
Assume also that $N_v$ is homologous to a multiple of $[C_v]$ in $M_v$,
that is, $[N_v] = n_v[\pi_v^{-1}(p)]$ for some
(well defined) $n_v\in\Z$.

For any $x\in\Sigma_v \setminus \pi_v(N_v)$, there is a
unique $m_x\in\Z$ so that
$\zeta_*([\pi^{-1}(x)]) = m_x[S^1] \in H_1(\C^*)$. This number is
a locally constant function of $x$. In fact, let
$\xi:[0,1]\to \sigma$ be a $1$-chain connecting $x = \xi(0)$ and
$y = \xi(1)$. We can assume that $\xi$ is an embedding, and
by a small perturbation, we can assume that the map
$N_v\to \Sigma$, induced by $\pi_v$, is an immersion, transverse to $\xi$.
At any intersection point of $\xi$ and $\pi_v(N_v)$, one sees that
$m_{\xi(\cdot)}$ changes by $\pm 1$, depending on the sign of the intersection.
In particular, if $x,y\in \partial \Sigma_v$, then $\xi$ is a cycle
inducing an element $[\xi] \in H_1(\Sigma_v,\partial \Sigma_1,\Z)$. It follows
from the assumptions that we made that
$\pi_{v,*}([N_v]) = 0 \in H_1(\Sigma,\Z)$, and so
\[
  (\pi_{v,*}([N_v]), [\xi])_{\Sigma_v} = 0.
\]
It follows that for $x,y\in \partial \Sigma$, the number $m_x = m_y$ is
a number which well defined by the map $\zeta$; we denote it by $m_v$.
\end{block}

\begin{definition} \label{def:mult}
Let $\zeta:M \to \C$ be as in \cref{block:mult}. We refer to the families
$(m_v)_{v\in\V}$ and $(n_w)_{w\in\W}$ (defined above) as the
\emph{family of multiplicities} and
\emph{dual family of multiplicities} associated with $\zeta$, respectively.
In a drawing of a plumbing graph, a multiplicity is written within
parenthesis, whereas a dual multiplicity is written in parenthesis
next to an arrow emanating from the vertex.
\end{definition}

\begin{lemma} \label{lem:top_ident}
Let $\zeta:M\to\C$ be as in \cref{block:mult}, and let $(m_v)_{v\in\V}$
and $(n_v)_{v\in\W}$ be the associated families of multiplicities and
dual multiplicities. Let $w\in\W$. If $e\in \E_w$ connects $w$ and
$v$, set $m_e = m_v$. We then have
\[
  -b_w m_w + \sum_{e\in\E_w} \epsilon_e m_e = n_v.
\]
\end{lemma}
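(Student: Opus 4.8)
The plan is to work inside the single piece $M_w$ and compute the image under $\zeta$ of the boundary $\partial M_w$ in two ways, exploiting that $[N_w] = n_w[C_w]$ in $H_1(M_w,\Z)$.

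First I would fix the relative Euler number setup for $\pi_w:M_w\to\Sigma_w$. Pick an open disk $D\subset\Sigma_w$ disjoint from $\pi_w(N_w)$ and from all the boundary circles, and extend the section given on the boundary components $B_{w,i}$ (and the fibers chosen over the $B_{v,w,i}$) to a section $\overline s$ over $\Sigma_w\setminus D$. By the definition of the Euler number recalled before \cref{lem:triv}, the twisted section $\overline s^{b_w}$ over $\partial D$ bounds in $\pi_w^{-1}(D)$, so in $H_1(M_w\setminus\pi_w^{-1}(D),\Z)$ one has $\overline s_*[\partial D] = -b_w[C_w]$, exactly as in the proof of \cref{lem:triv}. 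The key point is then that $[\partial \Sigma_w]$, oriented as the boundary, equals $[\partial D]$ in $H_1(\Sigma_w\setminus D,\Z)$ (both generate, and together they bound $\Sigma_w\setminus D$ with appropriate signs; this is the same sign bookkeeping as in \cref{lem:triv}). Lifting via $\overline s$, the cycle $\overline s_*[\partial\Sigma_w]$, which is the sum of the $\overline s_*[B_{v,w,i}]$ over edges $e\in\E_w$ plus the $\overline s_*[B_{w,i}]$ over arrowheads, equals $-b_w[C_w]$ in $H_1(M_w,\Z)$.

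Next I would push this identity forward by $\zeta_*$ into $H_1(\C^*,\Z)\cong\Z$. On a boundary torus $S_{v,w,i}$ corresponding to an edge $e$, the section $\overline s$ restricts to a fiber of $S_{v,w,i}\to B_{v,w,i}$, i.e. (up to sign governed by $\epsilon_e$, via \cref{lem:edge_signs}) a fiber $C_v$ of $\pi_v$; hence $\zeta_*\overline s_*[B_{v,w,i}] = \epsilon_e m_e[S^1]$ by the very definition of $m_e = m_v$. On a boundary circle $B_{w,i}$ belonging to an arrowhead $a$, the chosen section $s_{w,i}$ maps to a curve whose $\zeta$-degree is by definition the arrowhead multiplicity — this is what the right-hand side $n_v$ records (here $v$ is the arrowhead neighbour of $w$ in the statement), so $\zeta_*\overline s_*[B_{w,i}] = n_v[S^1]$. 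On the left, $\zeta_*(-b_w[C_w]) = -b_w m_w[S^1]$ because $\zeta_*[C_w] = m_w[S^1]$ is precisely the definition of the multiplicity $m_w$ (valid since $D$, hence $C_w$, was chosen away from $\pi_w(N_w)$). Equating coefficients of $[S^1]$ gives $-b_w m_w = \sum_{e\in\E_w}\epsilon_e m_e - (\text{arrowhead terms})$; rearranging yields the claimed formula. I would double-check the statement's indexing: as written the right side is a single $n_v$, so presumably $w$ has exactly one arrowhead neighbour $v$ in the relevant setup (otherwise the right side should be $\sum$ over arrowheads), and the sign/orientation convention on $B_{w,i}$ must be the one making the arrowhead contribution $+n_v$ rather than $-n_v$.

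The main obstacle is orientation and sign bookkeeping: getting the sign of $\overline s_*[\partial\Sigma_w]$ versus $-b_w[C_w]$ correct (the subtlety already flagged in \cref{lem:triv} about $\partial D$ vs. the complement-induced orientation), and correctly converting the fiber $C_v$ sitting inside $S_{v,w,i}$ into the oriented section governing $\epsilon_e$ via \cref{lem:edge_signs}. Once those two sign conventions are pinned down consistently with \cref{def:graph,lem:edge_signs}, the computation is a direct functoriality argument with no further content.
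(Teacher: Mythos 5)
There is a genuine gap, and it centers on what the right-hand side of the identity actually is. The quantity $n_w$ (the statement's $n_v$ is a slip for $n_w$; the dual family $(n_w)_{w\in\W}$ is indexed by $\W$) is defined in \cref{block:mult} by $[N_w]=n_w[C_w]$, where $N_w=\zeta^{-1}(0)\cap M_w$ is the zero locus of $\zeta$ inside the piece $M_w$. It is \emph{not} an arrowhead contribution: an arrowhead neighbour $a$ of $w$ contributes its multiplicity $m_a$ (the $\zeta$-degree of the image of the given section) to the sum $\sum_{e\in\E_w}\epsilon_e m_e$ on the left, on exactly the same footing as the non-arrowhead neighbours. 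By rerouting the arrowhead terms to the right-hand side you produce an identity in which the zero set $N_w$ plays no role; indeed your argument never uses the hypothesis $[N_v]=n_v[\pi_v^{-1}(p)]$ at all, and it would ``prove'' the lemma with right-hand side $0$, which is false whenever $\zeta$ vanishes somewhere in $M_w$. (The final rearrangement also does not reproduce the stated formula even formally, but that is secondary.)

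The same issue surfaces technically in the step ``push this identity forward by $\zeta_*$ into $H_1(\C^*,\Z)$'': $\zeta$ maps $M_w$ to $\C$, not to $\C^*$, precisely because $N_w$ is in general nonempty, so $\zeta_*$ is not defined on $H_1(M_w,\Z)$. The identity $\overline s_*[\partial\Sigma_w]=-b_w[C_w]$ holds in $H_1(M_w,\Z)$ but not in $H_1(M_w\setminus N_w,\Z)$; the defect is the algebraic intersection of the interpolating $2$-chain (the image of $\overline s$ together with the filling over $D$) with $N_w$, and that intersection number is exactly $\pm n_w$. If you track that intersection your approach can be repaired, but as written the $n_w$ term is simply lost. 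The paper sidesteps this by working with the $2$-chain $K_w=\zeta|_{M_w}^{-1}(\R_{\geq0})$, whose boundary is $-[N_w]+\sum_{e\in\E_w}[K_w\cap S_e]$; evaluating each $[K_w\cap S_e]$ in the basis $[C_v],[C_w]$ of $H_1(S_e,\Z)$ via intersection numbers and then applying \cref{lem:triv} yields the identity with the $-n_w$ term built in from the start. Your sign analysis via \cref{lem:edge_signs} is in the right spirit, but it does not address the missing $N_w$ contribution.
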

\begin{proof}
Let $C_w$ be a fiber of $\pi_w$. Since $M_w \cong \Sigma_w\times S^1$, the
element $[C_w]\in H_1(M_w,\Z)$ is nontorsion. It therefore suffices
to show that
\[
  \left(-n_w - b_w m_w + \sum_{e\in\E_w} \epsilon_e m_e\right)[C_w] = 0.
\]
We can assume that $\zeta$ is transversal to the submanifold with
boundary $\R_{\geq 0} \subset\C$ so that $\zeta|_{M_w}^{-1}(\R_{\geq0})$
is a submanifold with boundary $K_w$ in $M_w$. Furthermore, we can assume
that $K_w$ is transversal to $\partial M_w$. This way,
$[\partial K_w] = -[N_w] + \sum_{e\in\E_w} [K_w\cap S_e] \in H_1(M_w,\Z)$.
Let $e\in\E_w$, connecting $w$ and $v\in\V$. Assume that the fiber
$C_w$ was chosen so that $C_w\subset S_e$. Furthermore, let $C_v$ be a fiber
of $\pi_v$ contained in $S_e$ if $v\in\W$, otherwise, let $C_v$ be the
image of $s_v$.
It follows from definition that 
\[
  ([K_e\cap S_e], [C_w])_{S_e} = m_v,\quad
  ([K_e\cap S_e], [C_v])_{S_e} = m_v.
\]
Since $[C_v]$ and $[C_w]$ form a basis of $H_1(S_e,\Z)$, and we have
\[
  ([C_w],[C_w])_{S_e} = ([C_v],[C_v])_{S_e} = 0,\quad
  ([C_w],[C_v])_{S_e} = \epsilon_e,
\]
we get
$[K_e\cap S_e] = \epsilon_e (m_v[C_w] - m_w[C_v])$.
This yields
\[
\begin{split}
  0 = [\partial K_e]
   &= -n_w [C_w] + \sum_{e\in\E_w}\epsilon_e (m_v[C_w] - m_w[C_v]) \\
   &= \left(
        -n_w - b_w m_w + \sum_{e\in\E_w}\epsilon_e m_v
      \right) [C_w].
\end{split}
\]
Here, the variable $v$ inside the sum depends on $e$. The last equality
follows from \cref{lem:triv}
\end{proof}

\begin{example} \label{ex:holom_mult}
Let $\X$ and $E = \cup_{v\in\V} E_v$ be as in \cref{ex:surf}, and let
$h:\X\to\C$ be a holomorphic function. Decompose the divisor
of $h$ as $(h) = (h)_{\mathrm{exc}} + (h)_{\mathrm{str}}$ so that
$(h)_{\mathrm{exc}}$ is supported on $E$, and $(h)_{\mathrm{str}}$ has no
components with nonzero coefficient included in $E$.
We can then write $(h)_{\mathrm{exc}} = \sum_{v\in\V} m_v E_v$, and
$(h)_{\mathrm{str}} = \sum_{D} n_D D$
with $n_D = 0$ if $D = E_v$ for some $v\in\V$.
Assume that the support of $(h)_{\mathrm{str}}$
does not contain any intersection
points in $E$, that is, if $n_D \neq 0$, then $D\cap E_v\cap E_w = \emptyset$
for $v,w\in\W$, $v\neq w$.
If $T\subset \X$ is a small tubular neighbourhood around $E$, then
$M = \partial T$ is a plumbed manifold and $h|_M$ satisfies the
conditions in \cref{block:mult}. The associated family of
multiplicities is $(m_v)_{v\in\W}$.
Furthermore, the family $(n_v)_{v\in\V}$ of dual multiplicities
is given as the intersection $n_v = (E_v, (h)_{\mathrm{str}})$.

Note that here we do not assume
$(h)_{\mathrm{str}}$ to be smooth, only that its intersection points
with $E$ lie in the regular part of $E$.
\end{example}

\section{Negative continued fractions} \label{s:cont_frac}

In this section we discuss negative continued fractions and a plumbing
construction related to them. Some of the notation introduced in this
section follows \cite[III.5]{BHPVdV}.

\begin{block} \label{block:ncf}
Let $a,b$ be relatively prime integers, $b>0$. The fraction $a/b$ can
be written in a unieque way as a (negative) continued fraction
\begin{equation} \label{eq:strictffraction}
  \frac{a}{b} = k_1 - \frac{1}{k_2-\frac{1}{\cdots - \frac{1}{k_s}}}
\end{equation}
where $k_i\geq 2$ for $i\geq 2$. Further, we have $k_1 \geq 2$ if and only if
$a>b$ and $k_1 > 0$ if and only if $a > 0$. 
\end{block}

\begin{definition}
The rational number $a/b$ is called the \emph{(negative) continued fraction}
associated with the sequence $k_1,\ldots, k_s$ and is denoted by
$[k_1, \ldots, k_s]$. The sequence $k_1,\ldots, k_s$ is called the
\emph{(negative) continued fraction expansion} of the rational number
$a/b$.
\end{definition}

\begin{block} \label{block:ncf_extra}
Given $a/b = [k_1, \ldots, k_s]$ as above, 
define integers $\mu_i$ and $\tilde \mu_i$ for $0\leq i \leq s+1$ as follows.
Start by setting
\[
  \mu_0 = 0,\quad \mu_1 = 1, \quad
  \tilde \mu_0 = -1,\quad \tilde\mu_1 = 0.
\]
Then, assuming that we have defined $\mu_j, \tilde\mu_j$
for $0\geq j\geq i$ for some $i>0$, define
\[
  \mu_{i+1} = k_i \mu_i - \mu_{i-1},\quad
  \tilde \mu_{i+1} = k_i \tilde \mu_i - \tilde \mu_{i-1}.
\]
Using induction, one finds
\begin{equation} \label{eq:det_is_one}
  \begin{vmatrix}
    \mu_i       & \mu_{i+1}       \\
    \tilde\mu_i & \tilde\mu_{i+1}       
  \end{vmatrix}
  = 1, \quad
  i=0, \ldots, s.
\end{equation}
Furthermore, the numbers $\mu_i$ and $\tilde \mu_i$ are positive for $i>1$
if $a>0$.
A simple induction on $s$ also proves $\mu_{s+1} = a$ and
$\tilde\mu_{s+1} = b$.
\end{block}

\begin{lemma} \label{lem:ncf_bamboo}
Let $a,b$ be positive integers with no common factors, and let
$k_i, \mu_i, \tilde\mu_i$ be defined as above.
The manifold $M = \bar D\times S^1$ is a plumbed manifold, given as
$M = \cup_{i=1}^s M_i$ where
\[
  M_1 = D_{\frac{1}{s}}\times S^1,\quad
  M_i = (\bar D_{\frac{i+1}{s}} \setminus D_{\frac{i}{s}})\times S^1,\,
  i=2,\ldots,s.
\]
We set $\Sigma_1 = D_{\frac{1}{s}}$ and
$\pi_1:M_1\to\Sigma_1$, $(rt_1, t_2) \mapsto rt_1$ where $r\in\R_{\geq 0}$
and $t_i \in S^1$, as well as
$\Sigma_i = \bar D_{\frac{i}{s}} \setminus D_{\frac{i-1}{s}}$ for $i>1$
and
$\pi_i:M_i\to\Sigma_i$, $(rt_1, t_2)\mapsto r t_1^{\mu_i} t_2^{\tilde\mu_i}$
for $i>1$.
The section over $S^1\subset \Sigma_s$ is given by
$t \mapsto (t^{\tilde\mu_{s+1}}, t^{-\mu_{s+1}})$.
The associated plumbing graph is shown in \cref{im:Bamboo}.
\mynd{Bamboo}{ht}{Plumbing representation of
$S^1\times \overline D$.}{im:Bamboo} 
\end{lemma}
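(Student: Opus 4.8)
The plan is to verify directly that the decomposition $M = \bigcup_{i=1}^s M_i$ satisfies the four conditions of \cref{def:plumbing} and then compute the decorations of the associated graph, matching them against \cref{im:Bamboo}. First I would check that each $M_i$ is indeed a trivial $S^1$-bundle over $\Sigma_i$ via the stated maps $\pi_i$; the point is that $(r,t_1,t_2)\mapsto (rt_1,t_2)$ gives polar-type coordinates on the annulus-cross-circle pieces, and the map $(rt_1,t_2)\mapsto rt_1^{\mu_i}t_2^{\tilde\mu_i}$ is a submersion onto $\Sigma_i$ precisely because the circle direction $t_2\mapsto t_2^{\tilde\mu_i}$ (for $i>1$, where $\tilde\mu_i\neq 0$ once $i\geq 2$... note $\tilde\mu_1=0$, so $M_1$ is genuinely the trivial projection to the disk) together with $t_1$ spans. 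The fibers of $\pi_i$ are the circles $\{r\}\times\{rt_1\text{-level set}\}$, i.e. curves of the form $\theta\mapsto (rt_1 e^{i\tilde\mu_i\theta\cdot(\ast)}, t_2 e^{i\theta\cdot(\ast)})$; I would record their homology classes in $H_1(M_i,\Z)\cong H_1(S^1\times S^1,\Z)$ in terms of the basis coming from the two $S^1$ factors of the ambient $\overline D\times S^1$, getting class $(\mu_i,\tilde\mu_i)$ for $i>1$ and $(1,0)$ for $i=1$ (reading off exponents).

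Next I would identify the gluing tori: $M_i\cap M_{i+1} = S^1_{(i)/s}\times S^1$, and check condition (iii), that the two projections $\pi_i,\pi_{i+1}$ restricted there give a diffeomorphism onto the product of the two base circles. On this torus, $\pi_i$ records the combination $t_1^{\mu_i}t_2^{\tilde\mu_i}$ and $\pi_{i+1}$ records $t_1^{\mu_{i+1}}t_2^{\tilde\mu_{i+1}}$; the map $(t_1,t_2)\mapsto(t_1^{\mu_i}t_2^{\tilde\mu_i}, t_1^{\mu_{i+1}}t_2^{\tilde\mu_{i+1}})$ is a diffeomorphism of $S^1\times S^1$ exactly because of the determinant identity \cref{eq:det_is_one}. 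The boundary torus at the outer edge $r = 1$ is $M_s\cap\partial M = S^1\times S^1$, and the claimed section $t\mapsto(t^{\tilde\mu_{s+1}}, t^{-\mu_{s+1}})$ is a genuine section of $\pi_s$ there since $\pi_s$ of that point is $t^{\mu_s\tilde\mu_{s+1} - \tilde\mu_s\mu_{s+1}} = t^{-1}$ — wait, I would orient carefully so this comes out to $t$; in any case the computation is $\pm 1$ by \cref{eq:det_is_one} with $i=s$, and the sign is absorbed into the choice of orientation-preserving identification. This verifies $M$ is a plumbed manifold; the underlying space is $\overline D\times S^1$ by construction.

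It then remains to compute the graph decorations. The sign on each internal edge: on the torus $M_i\cap M_{i+1}$ the fiber $C_i$ of $\pi_i$ has class $(\mu_i,\tilde\mu_i)$ and the fiber $C_{i+1}$ has class $(\mu_{i+1},\tilde\mu_{i+1})$, and the intersection number $([C_i],[C_{i+1}])$ in the torus is, up to the orientation convention of \cref{def:plumbing}, the determinant $\mu_i\tilde\mu_{i+1}-\mu_{i+1}\tilde\mu_i = 1$ from \cref{eq:det_is_one}; so all signs are $+$, matching the convention that omitted signs are positive. For the Euler numbers: by \cref{lem:triv}, the relative Euler number $-b_i$ of $\pi_i\colon M_i\to\Sigma_i$ relative to the given boundary trivializations satisfies $-b_i[C_i] = [\text{section}]$ in $H_1(M_i,\Z)$. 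For $i>1$, $\Sigma_i$ is an annulus, and the two boundary trivializations are the fibers $C_{i-1}$ (class $(\mu_{i-1},\tilde\mu_{i-1})$, from $S_{i-1,i}$) and $C_{i+1}$ (class $(\mu_{i+1},\tilde\mu_{i+1})$); the relation $\mu_{i+1} = k_i\mu_i - \mu_{i-1}$, $\tilde\mu_{i+1} = k_i\tilde\mu_i - \tilde\mu_{i-1}$ says exactly that $[C_{i+1}] + [C_{i-1}] = k_i[C_i]$ in the basis, which translates (the difference of the two boundary sections equals $k_i$ times the fiber, up to sign and the boundary-orientation convention) to relative Euler number $-k_i$, i.e. $b_i = k_i$. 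For $i=1$, $\Sigma_1$ is a disk with one boundary component carrying the trivialization $C_2$ of class $(\mu_2,\tilde\mu_2) = (k_1, 1)$, so extending the section over the disk costs $k_1 = \mu_2$ twists and $b_1 = k_1$. All genera are $0$ since the $\Sigma_i$ are disks and annuli, and there are $s-1$ internal edges forming a chain $M_1-M_2-\cdots-M_s$; hence the graph is the bamboo in \cref{im:Bamboo}.

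The main obstacle I expect is bookkeeping of orientations and signs: \cref{def:plumbing} and \cref{def:graph} orient the gluing tori as subsets of $\partial M_v$ (not symmetrically), and the relative Euler number of \cref{lem:triv} carries a sign coming from boundary-versus-complement orientation of a disk boundary; reconciling these with the "standard" bamboo decorations $-k_i$ requires care but no new ideas, and every sign discrepancy is forced to resolve by \cref{eq:det_is_one} together with \cref{lem:edge_signs}. The positivity statements of \cref{block:ncf_extra} (that $\mu_i,\tilde\mu_i>0$ for $i>1$ when $a>0$) are used only to know the fibers wind in a consistent direction, which is what makes the stated maps $\pi_i$ honest bundle projections rather than merely submersions away from a locus.
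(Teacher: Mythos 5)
Your verification of the plumbing structure itself --- that each $\pi_i$ is an $S^1$-fibration, that condition (iii) of \cref{def:plumbing} holds on the gluing tori via the map $(t_1,t_2)\mapsto(t_1^{\mu_i}t_2^{\tilde\mu_i},t_1^{\mu_{i+1}}t_2^{\tilde\mu_{i+1}})$, that the stated boundary map is a genuine section, and that the internal edge signs are positive --- follows the paper's proof essentially verbatim: every one of these points reduces to the determinant identity \cref{eq:det_is_one}, exactly as in the paper. Where you genuinely diverge is the computation of the Euler numbers $-k_i$. The paper does this indirectly: it introduces the auxiliary function $\zeta(z,t)=t$ on $M$, notes that $\zeta$ is nowhere vanishing so all dual multiplicities are zero, reads off the multiplicities $m_{v_i}=\mu_i$ from the fiber parametrization $t\mapsto(rt^{-\tilde\mu_i},t^{\mu_i})$, and then invokes \cref{lem:top_ident} to obtain $-b_{v_i}\mu_i+\mu_{i-1}+\mu_{i+1}=0$, which forces $b_{v_i}=k_i$ by comparison with the recursion defining the $\mu_i$. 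You instead apply \cref{lem:triv} directly, identifying the sum of the two boundary trivializations of $M_i$ with $k_i$ times the fiber class in $H_1(M_i,\Z)$ via the same recursion. The two routes are close cousins (\cref{lem:top_ident} is itself proved from \cref{lem:triv}), but the paper's detour through $\zeta$ packages the boundary-orientation conventions once and for all, whereas your direct route forces you to confront them torus by torus --- which you correctly flag as the delicate point but partially defer with ``every sign discrepancy is forced to resolve.''

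One slip worth correcting: you assert that the fiber of $\pi_i$ has class $(\mu_i,\tilde\mu_i)$ in $H_1(S^1\times S^1,\Z)$, ``reading off exponents,'' and that the fiber of $\pi_1$ has class $(1,0)$. The exponents $(\mu_i,\tilde\mu_i)$ are the coefficients of the projection, not of its fibers; the fiber is the kernel direction and has class $\pm(-\tilde\mu_i,\mu_i)$ (the paper parametrizes it as $t\mapsto(rt^{-\tilde\mu_i},t^{\mu_i})$), and the fiber of $\pi_1$ has class $(0,1)$. None of your downstream computations actually break, because both the intersection determinant and the linear recursion $\mu_{i+1}=k_i\mu_i-\mu_{i-1}$, $\tilde\mu_{i+1}=k_i\tilde\mu_i-\tilde\mu_{i-1}$ are invariant under the uniform substitution $(a,b)\mapsto(-b,a)$, but as written the homology classes are wrong and should be fixed before the argument is considered complete.
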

\begin{proof}
It is clear that the given components intersect in tori. Furthermore,
\cref{eq:det_is_one} gives $\gcd(\tilde\mu_i, \mu_i) = 1$. It follows
that $\pi_i$ is an $S^1$ fibration for all $i$.
Another consequence of \cref{eq:det_is_one} is that for
$1\leq i < s$, the map $\pi_i\times\pi_{i+1}:M_i\cap M_{i+1}\to S^1\times S^1$
is a diffeomorphism and that fibers of $\pi_i$ and $\pi_{i+1}$ intersect
positively in the torus $M_i\cap M_{i+1}$.
The same equation shows that the map
$t \mapsto (t^{\tilde\mu_{s+1}}, t^{-\mu_{s+1}})$ is really a section:
\[
  \pi_s(t^{\tilde\mu_{s+1}}, t^{-\mu_{s+1}})
  = t^{\mu_s \tilde\mu_{s+1}-\tilde\mu_s \mu_{s+1}} = t.
\]
Therefore, $M$ is a plumbed manifold with $s$ components.
What is left to show is that the Euler number for the $i^{\mathrm{th}}$
vertex, call it $v_i$, in the graph is $-k_i$.
To see this, consider the function $\zeta:M\to \C$, $(z,t)\mapsto t$. 
The function does not vanish on $M$, and so the dual set of multiplicities
vanish.
We have parametrizations
$S^1\to M_i$, $t\mapsto(r t^{-\tilde \mu_i}, t^{\mu_i})$ of a fiber of
$\pi_i$ for a suitable $r$. Thus, the multiplicities of $\zeta$
are given by $m_{v_i} = \mu_i$, and similarly,
$m_a = \mu_{s+1}$, where
$a$ is the arrowhead. Thus, by \cref{lem:top_ident}, we have
$-b_{v_i} \mu_i + \mu_{i-1} + \mu_{i+1} = 0$ for $1\leq i \leq s$.
Since the same equation holds with $b_{v_i}$ replaced with $k_i$ (and
$\mu_i \neq 0$), we
get $-b_{v_i} = -k_i$.
\end{proof}

\section{Construction} \label{s:constr}

In this section we state our main result in details.
We construct a plumbing graph $G$ from the resolution graph $\Gamma$
along with the multiplicities $m_v$ and $l_v$ of $f$ and $g$.
\Cref{thm:boundary} says that this construction describes the boundary
of the Milnor fiber of the hypersurface singularity given by
$\Phi(x,y,z) = f(x,y) + zg(x,y)$.

\begin{definition} \label{def:constr}
\begin{blist}

\item \label{it:constr_G1}
Let $\Gamma'$ be a connected component of $\Gamma_1$.
Let $\V(\Gamma')$ be the vertex set of $\Gamma'$ and, for $v\in\V(\Gamma')$,
let
$\hat\E_v(\Gamma')$ be the set of edges connecting $v$
and a vertex in $\A_{f,1} \cup \W_2$.
Set also $\hat\E(\Gamma') = \cup_{v\in\V(\Gamma')}\hat\E_v(\Gamma')$.
For any edge $e\in\hat\E(\Gamma')$ connecting $v\in\V(\Gamma')$
and $w\in\A_{f,1}\cup\W_2$, set $v_e = v$ and $w_e = w$ and
$m_e = \gcd(m_v,m_w)$.
For $v\in\V(\Gamma')$, let $\hat\delta_v$ be the number of edges
connecting $v$ and some vertex in $\V(\Gamma')$ or $\hat\V(\Gamma')$.
Let $d_{\Gamma'} = \gcd_{v\in\V(\Gamma') \cup \hat\V(\Gamma')} m_v$ and
define $g_{\Gamma'}, -b_{\Gamma'}$ by the equations
\begin{align}
  d_{\Gamma'}(2-2g_{\Gamma'})
    &=   \sum_{v\in\V(\Gamma')} m_v(2-\hat\delta_v)
	   + \sum_{e\in\hat\E(\Gamma')} m_e, \label{eq:G1_genus} \\
  d_{\Gamma'}(-b_{\Gamma'})
    &=   \sum_{e\in\hat\E(\Gamma')} m_{v_e} l_{w_e} - m_{w_e} l_{v_e}.
       \label{eq:G1_selfint}
\end{align}
Since $d_{\Gamma'} \neq 0$, these are well defined. As we will
see later, we have $g_{\Gamma'},-b_{\Gamma'} \in\Z$.

The graph $G_{\Gamma'}$ has vertex set
$\W(G_{\Gamma'}) = \{ v_{\Gamma',1}, \ldots, v_{\Gamma',d_{\Gamma'}} \}$,
with each vertex decorated by the selfintersection number $-b_{\Gamma'}$
and genus $[g_{\Gamma'}]$.
No two of these vertices are connected by an edge.
Define $G_1$ as the disjoint union of the graphs obtained in this way.

\item \label{it:constr_f1}
Let $a \in \A_{f,1}$ and write
$m_a / m_w = [k_1, \ldots, k_s]$. The graph $G_a$ has $2s+1$ vertices
$v_{a,1,+}, \ldots, v_{a,s,+}, v_{a,1,-}, \ldots, v_{a,s,-}, v_{a,0}$.
There is an edge with sign $\pm$ connecting $v_{a,i,\pm}$ and $v_{a,i,\pm}$
for each $1\leq i \leq s-1$, as well as positive
edges connecting $v_{a,0}$ and $v_{a,s,\pm}$.
All these vertices have genus zero. The vertex $v_{a,i,\pm}$
has selfintersection $-b_{a,i,\pm} = \mp k_i$ and $v_{a,0}$ has
selfintersection number $-b_{a,0} = 0$.

Define $G_{f,1}$ as the disjoint union of these graphs.

\item \label{it:constr_bridge}
Let $v_1\in\W_1$ and $v_2\in\W_2$ be vertices of $\Gamma$ connected by
an edge $e$ and write $m_{v_2}/m_{v_1} = [k_1,\ldots,k_s]$.
The graph $G_e$ has vertices
$v_{e,1,+}, \ldots, v_{e,s,+}$, $v_{e,1,-}, \ldots, v_{e,s,-}, v_{e,0}$,
each with genus zero. The vertex $v_{e,i,\pm}$ has the selfintersection
number $-b_{e,i,\pm} = \mp k_i$ and $v_{e,0}$ has selfintersection
$b_{e,0} = 0$.
We have an edge with sign $\pm$ connecting $v_{e,i,\pm}$ and
$v_{e,i+1,\pm}$, as well as positive edges connecting
$v_{e,s,\pm}$ and $v_{e,0}$.

Let $G_{\mathrm{b}}$ be the disjoint union of graphs obtained in this way.

\item \label{it:constr_G2}
The graph $G_2$ is defined as follows. For each $w\in\W_2$, we have
two vertices $v_{w,+}, v_{w,-}$ in $G_2$ and these are all the
nonarrowhead vertices of $G_2$. They are decorated by genus zero and have
selfintersection number $-b_{w,\pm} = \mp b_w$, where
$-b_w$ is the selfintersection number of $w$ in $\Gamma$.

\item \label{it:constr_g2}
Let $a\in\A_{g,2}$. The graph $G_a$ has nonarrowhead vertices
$v_{a,0}, \ldots, v_{a,m_w}$ where $w = w_a$, each of genus zero.
The vertex $v_{a,0}$ has selfintersection number $-b_{a,0} = 0$, whereas
$v_{a,i}$ has selfintersection $-b_{a,i} = -l_a$.
For each $1\leq i \leq m_w$ there is a negative edge connecting
$v_{a,0}$ and $v_{a,i}$.

Let $G_{g,2}$ be the disjoint union of graphs obtained in this way.

\end{blist}
\end{definition}

\begin{definition} \label{def:bridges}
The graph $G$ is the disjoint union of the graphs
$G_1$, $G_{f,1}$, $G_{\mathrm{b}}$, $G_2$, $G_{g,2}$,
with the following additional edges.

\begin{blist}

\item \label{it:bridges_f1}
For $\Gamma'\subset\Gamma_1$ a connected component,
$1\leq i \leq d_{\Gamma'}$ and $a\in\A_{f,1}$, connect $v_{\Gamma',i}$
and $v_{a,0}$ with $m_e/d_{\Gamma'}$ negative edges, where
$m_e$ is as in \cref{def:constr}\cref{it:constr_G1}.

\item \label{it:bridges_br1}
Similarly, assuming that $\Gamma'\subset\Gamma_1$ is a connected component,
$1\leq i \leq d_{\Gamma'}$ and that $v_1\in\W(\Gamma')$ and $v_2\in\W_2$
are connected by an edge $e\in\E(\Gamma)$. Connect
$v_{\Gamma',i}$ and $v_{e,0}$ with $m_e/d_{\Gamma'}$ negative edges
and connect $v_{w,\pm}$ and $v_{a,1,\pm}$ with an edge with sign $\pm$.

\item \label{it:bridges_g2}
Let $a\in\A_{g,2}$ and $w = w_a$. The vertex $v_{a,0}$ is connected to
both $v_{w,+}$ and $v_{w,-}$ by a positive edge.

\end{blist}

\end{definition}

\begin{thm} \label{thm:boundary}
The boundary of the Milnor fiber of the singularity $f(x,y) + zg(x,y) = 0$
at the origin is a plumbed manifold with plumbing graph $G$.
\end{thm}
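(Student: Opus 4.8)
The plan is to prove \cref{thm:boundary} by dissecting $M_{f,g} = \partial F_{f,g}$ along a collection of tori into the pieces prescribed by the graph $G$, identifying each piece as an $S^1$-bundle over a surface with a specified Euler number and sections, and then checking that the glueing data (edge signs) match \cref{def:constr} and \cref{def:bridges}. Concretely, I would start from the description of $F_{f,g}$ in \cref{def:T_f_g}: it is obtained from $\overline T_{f,g}$ (\cref{def:neighbourhoods}) by twisting along the disks $E_a\cap\overline T_{f,g}$ for $a\in\A_g$. So the first step is to analyze $M'_{f,g} = \partial\overline T_{f,g}$ and then track the effect of the twists on the boundary, since twisting a $4$-manifold along a disk whose boundary lies in $\partial X$ modifies $\partial X$ by surgery/cutting-and-regluing along the boundary torus of a tubular neighbourhood of that disk.

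The heart of the argument is local: near each component $E_w$ of the exceptional divisor one has the standard plumbing piece (an $S^1$-bundle over $E_w$ minus disks, as in \cref{ex:surf}), but $\overline T_{f,g}$ removes the parts lying over $T'$ and over the ``$g$-side'' and inserts $\overline T_\epsilon$, a neighbourhood of the pullback Milnor fiber $F'_f$. The key computation is to see, for $w\in\W_1$, that the relevant boundary piece (including the $\overline T_\epsilon$ part, which is a trivial $F_f$-bundle-like collar) collapses along each connected component $\Gamma'$ of $\Gamma_1$ into $d_{\Gamma'}$ copies of an $S^1$-bundle over a surface, with Euler number and genus forced by additivity of the relevant multiplicities — precisely \cref{eq:G1_genus} and \cref{eq:G1_selfint}. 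I would verify these by applying \cref{lem:top_ident} and \cref{lem:triv} with the function $\zeta = z$ restricted to $M_{f,g}$, using the multiplicity bookkeeping that will be set up in \cref{s:mult} (this is where \cref{thm:mult} is used as an input, or proved in parallel). For $w\in\W_2$ one gets the two ``mirror'' pieces $v_{w,\pm}$ with opposite-signed Euler numbers $\mp b_w$, reflecting that the boundary of a neighbourhood of $E_w$ splits into two sides when one removes a smaller tube $T'$, and \cref{notconv:notation}\cref{it:notation_orientation} explains the sign flip. The arrowhead/bamboo pieces $G_a$ for $a\in\A_{f,1}$ and the bridge pieces $G_e$ come from \cref{lem:ncf_bamboo}: along a strict-transform component or along an edge of $\Gamma$ joining $\W_1$ and $\W_2$, the gluing is governed by the continued fraction $m_a/m_w = [k_1,\dots,k_s]$ (resp. $m_{v_2}/m_{v_1}$), and the doubled $\pm$-bamboo with a central $0$-vertex is exactly the plumbing calculus normal form for the solid-torus-with-two-sign-conventions that arises there. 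Finally, the twists along the disks over $a\in\A_{g,2}$ by $l_a$ (\cref{def:T_f_g}) turn a trivially-fibred solid-torus piece into the star-shaped graph $G_a$ of \cref{def:constr}\cref{it:constr_g2}, with the $-l_a$ self-intersections produced directly by the $k$-th twist formula $t_k(x,z)=\psi(x,x^kz)$ with $k=l_a$, and the negative edges recording the orientation-reversal inherent in a twist.

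After the pieces are identified, the remaining work is combinatorial: show that the tori along which I cut $M_{f,g}$ are exactly the edges of $G$, and that the induced identifications carry the correct signs $\epsilon_e\in\{+,-\}$. For the sign of each edge I would use \cref{lem:edge_signs}: the sign is $+$ iff (minus) a fibre of one side is a section of the torus viewed over the other side's base circle. The bridge edges \cref{def:bridges}\cref{it:bridges_br1} and the $\pm$-labelled internal edges of the bamboos get their signs from the $\mu_i,\tilde\mu_i$ recursion and the orientation conventions; the negative ``multi-edges'' of \cref{def:bridges}\cref{it:bridges_f1,it:bridges_br1} (there are $m_e/d_{\Gamma'}$ of them) record that a single component $E_w$ with $m_w$ large meets the $f$-side (or a $\W_2$-bridge) along a torus that covers the base several times, so the connected $\Gamma'$-piece is attached by $m_e/d_{\Gamma'}$ parallel tori per vertex $v_{\Gamma',i}$.

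I expect the main obstacle to be the bookkeeping around the component $\overline T_\epsilon$ and its interface with the $\W_1$ pieces: one must show rigorously that inserting a neighbourhood of $F'_f$ and then removing $T'$ does not disturb the $S^1$-bundle structure away from the gluing tori, and that the resulting piece over a whole connected component $\Gamma'$ of $\Gamma_1$ is genuinely an $S^1$-bundle over a single surface (not just a graph manifold), so that \cref{eq:G1_genus,eq:G1_selfint} make sense and indeed give integers. This amounts to checking that all the ``internal'' gluing tori inside such a piece are sign-$+$ and that the fibres are coherently identified — essentially a statement that $z/f$ (or the relevant local coordinate) has constant multiplicity on $\Gamma'$, so $d_{\Gamma'}$ is the right number of components — and it is where the detailed multiplicity computations of \cref{s:mult} (culminating in \cref{thm:mult}) are indispensable. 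The twisting argument for $\A_{g,2}$ is conceptually clean but the orientation/sign accounting (ensuring the new self-intersection is exactly $-l_a$ and the new edges are negative) will need care with \cref{notconv:notation}\cref{it:notation_bundles,it:notation_orientation}.
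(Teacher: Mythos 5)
Your overall strategy coincides with the paper's: decompose $M_{f,g}$ into pieces indexed by the vertices of $G$, identify each piece as an $S^1$-bundle (using \cref{lem:ncf_bamboo} for the bamboos attached to $\A_{f,1}$ and to the $\W_1$--$\W_2$ edges, and the twist construction for $\A_{g,2}$), determine edge signs via \cref{lem:edge_signs}, and recover the nontrivial Euler numbers from the multiplicities of $z$ through \cref{lem:top_ident}, with \cref{thm:mult} proved in parallel. This is exactly how the paper proceeds.

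The one place where your plan would not go through as written is the verification of \cref{eq:G1_genus}. You propose to obtain both \cref{eq:G1_genus} and \cref{eq:G1_selfint} ``by applying \cref{lem:top_ident} and \cref{lem:triv} with $\zeta = z$,'' but those lemmas only constrain Euler numbers; they carry no information about the genus of the base surface. In the paper the base of the $\Gamma'$-piece is the surface $F_f\cap X'_{\Gamma'}$, and its topology is pinned down by two separate inputs: the count $d_{\Gamma'}$ of its connected components together with the cyclic monodromy action on them (Proposition 2.20 of \cite{Nem_cyclic}), and a local version of A'Campo's formula \cite{ACampo} giving $\chi(F_{f,\Gamma'}) = \sum_{v\in\V(\Gamma')} m_v(2-\hat\delta_v)$, combined with the count $\sum_{e\in\hat\E(\Gamma')} m_e$ of boundary components; only together do these force the genus decoration $g_{\Gamma'}$ of \cref{eq:G1_genus}. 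You correctly flag the $\overline T_\epsilon$/$\W_1$ interface as the main obstacle and correctly guess that the gcd of the multiplicities governs $d_{\Gamma'}$, but without the Euler-characteristic computation the genus of $v_{\Gamma',i}$ is not determined by anything in your outline.
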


\begin{rem} \label{rem:blow_down_g2}
\begin{blist}

\item
Let $a\in\A_{g,2}$ and assume that $l_a = 1$. In this case, the vertices
$v_{a,1},\ldots,v_{a,m}$, where $w = w_a$ and $m = m_w$, blow down to
simplify the graph (see \cite{Neu_plumb} for blowing down). This operation
removes these vertices, and replaces the Euler number $-b_{a,0} = 0$
with $-b_{a,0} = m$.

\item
We can apply the operation R0(a) from \cite{Neu_plumb} to the vertices
$v_{\Gamma',i}$ for a connected component $\Gamma'\subset \Gamma_1$
as well as to $v_{a,i}$ for $a\in\A_{g,2}$ and $1\leq i \leq m_{w_a}$.
This way, all the edges adjacent to these vertices will be positive
instead of negative. Note, however, that this also changes the sign of
the corresponding multiplicities given in \cref{s:mult}.

\end{blist}
\end{rem}

\section{A multiplicity system for $z$} \label{s:mult}

In this section, we give multiplicities and dual multiplicities
for the function $z$. For simplicity, the multiplicitiy and
dual multiplicity for a vertex $v_*$ constructed in
\cref{def:constr} will be denoted by $m_*$ and $n_*$.
The proof of \cref{thm:mult} is given in \cref{s:proofs}.

\begin{thm} \label{thm:mult}
The restriction $z|_M$ of the coordinate function $z$ to the boundary
$M = \partial F_{\Phi}$ of the Milnor fiber of $\Phi$ satisfies the
conditions given in \cref{block:mult}. Furthermore, the associated
families of multiplicities $(m_v)_{v\in\V(G)}$ and
dual multiplicities $(n_v)_{v\in\W(G)}$
are is given as follows. 

\begin{blist}

\item \label{it:mult_G1}
If $\Gamma'\subset\Gamma_1$ is a connected component, then
$m_v = 1$ and $n_v = 0$ for $v = v_{\Gamma',i}$, $i=1, \ldots, d_{\Gamma'}$.

\item \label{it:mult_f1}
Let $a\in\A_{1,f}$ be an arrowhead connected to $w\in\W_1$ in $\Gamma$ and
set $\tilde m_w = m_w / \gcd(m_w,m_a)$ and $\tilde m_a = m_a / \gcd(m_w,m_a)$.
Write $m_w/m_a = \tilde m_w/\tilde m_a = [k_1, \ldots, k_s]$ and define
$\mu_i, \tilde\mu_i$
as in \cref{block:ncf}. The multiplicities of $z$ are given by
\[
  m_{a,i,+} = (m_w - l_w) \mu_i - m_a \tilde\mu_i,\quad
  m_{a,i,-} = l_w \mu_i
\]
for $i=1,\ldots, s$ and $m_{a,0} = -\tilde m_a l_w$.
The dual multiplicities for these
vertices are given by $n_{a,1,+} = m_a$, and $0$ otherwise.

\item \label{it:mult_bridge}
Let $v_1, v_2$ and $e$ be as in \cref{def:constr}\cref{it:constr_bridge}.
Let $\tilde m_i = m_{v_i} / \gcd(m_{v_1},m_{v_2})$ for $i=1,2$.
Write $m_{v_2}/m_{v_1} = \tilde m_2 / \tilde m_1 = [k_1,\ldots,k_s]$
and define $\mu_i, \tilde\mu_i$ for $i=1,\ldots,s$
as in \cref{block:ncf}. Then 
$m_{e,0} = \tilde m_1 l_{v_2} - \tilde m_2 l_{v_1}$ and
\[
  m_{e,i,+} = (m_{v_1} - l_{v_1}) \mu_i
               - (m_{v_2} - l_{v_2}) \tilde \mu_i, \quad
  m_{e,i,-} = l_{v_1} \mu_i - l_{v_2} \tilde \mu_i
\]
The dual multiplicities vanish on these vertices.

\item \label{it:mult_G2}
Let $w\in \W_2$. Then $m_{w,+} = m_w - l_w$ and $m_{w,-} = l_w$.
The dual multiplicities are given by $n_{w,+} = \sum_{w_a = w} m_a$
and $n_{w,-} = 0$.

\item \label{it:mult_g2}
Let $a\in \A_{g,2}$ and set $w = w_a$.
Then $m_{a,0} = -l_a$ and $m_{a,i} = 1$ for $i=1,\ldots,m_w$.
The dual multiplicities associated with $v_{a,i}$ vanish.

\end{blist}
\end{thm}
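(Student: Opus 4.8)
The plan is to compute the multiplicities and dual multiplicities directly on the plumbed pieces coming from the construction in \cref{s:constr}, using the explicit model of $M = M_{f,g}$ that is built in \cref{s:proofs} out of the neighbourhoods described in \cref{def:neighbourhoods} and \cref{def:T_f_g}. The key observation is that the coordinate function $z$ has, on each piece of $\overline T_{f,g}$ before twisting, a transparent local form: on $\overline T_{f,1}\setminus T'$ and on $\overline T_2\setminus(T'\cup T_{g,2})$ we are near the exceptional divisor of the resolution $\phi:V\to\C^2$, where $z$ is essentially controlled by the equation $f = zg$ on the fibre, i.e. $z = f/g$ along $F'_f$, so its vanishing order and argument-winding along each $E_v$ are governed by the divisorial multiplicities $m_v$ of $f$ and $l_v$ of $g$; on $\overline T_\epsilon$ we are in a collar of $F'_f$ where $z$ is close to $\epsilon/g$, again with winding dictated by $l_v$. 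First I would verify the hypotheses of \cref{block:mult}: that $0$ is a regular value of $z|_M$ (it is, since $z^{-1}(0)\cap F_\Phi$ is the smooth part of $\{g = 0\}$-related locus by the description in \cite{Sigurdsson2016}) and that $z$ does not vanish on any $\partial M_v$; this is where the splitting $\W = \W_1\amalg\W_2$ and the removal of the neighbourhoods $T'$ and $T_{g,2}$ matter, because $z$ can vanish only along components of $\A_g$ and we have arranged the pieces so those loci sit in the interiors.

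Next I would compute $(m_v)$ and $(n_w)$ piece by piece, matching the list in the statement. For the $G_1$ vertices (item \cref{it:mult_G1}): on a component $\Gamma'\subset\Gamma_1$ we have $m_w\le l_w$, so after the identification in \cref{s:proofs} the relevant torus fibration sees $z = f/g$ with nonpositive divisorial order, and the contractions producing the $d_{\Gamma'}$ vertices $v_{\Gamma',i}$ collapse exactly the winding, giving $m_{v_{\Gamma',i}} = 1$ and (since $z$ has no strict-transform zeros meeting these pieces) $n = 0$; I would check $1$ is forced by $\gcd$-considerations matching $d_{\Gamma'}$. For the bamboo pieces $G_a$ ($a\in\A_{f,1}$, item \cref{it:mult_f1}) and the bridge pieces $G_e$ (item \cref{it:mult_bridge}), the pieces are modeled on the plumbing of \cref{lem:ncf_bamboo} with the continued fraction $m_w/m_a = [k_1,\dots,k_s]$ (resp. $m_{v_2}/m_{v_1}$); there the fibre of $\pi_{v_{a,i,\pm}}$ is parametrized as in that lemma by $t\mapsto(rt_1^{-\tilde\mu_i},t_2^{\mu_i})$ in suitable coordinates $(t_1,t_2)$ on the boundary torus, and composing with $z$ — which on the $+$ side behaves like (a unit times) $f/g$ with orders $(m_w - l_w, {-}m_a)$ in $(t_1,t_2)$-directions, and on the $-$ side like $g$-type with order $l_w$ — yields $m_{a,i,+} = (m_w-l_w)\mu_i - m_a\tilde\mu_i$ and $m_{a,i,-} = l_w\mu_i$ after expanding the winding; the central vertex $v_{a,0}$ sits where the two sides are glued along the twisted disk, giving $m_{a,0} = -\tilde m_a l_w$. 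The dual multiplicity $n_{a,1,+} = m_a$ comes from the strict transform of $f$ passing through $E_w$ with local intersection $m_a$ (cf. \cref{ex:holom_mult}), and vanishes elsewhere. The $G_2$ and $G_{g,2}$ pieces (items \cref{it:mult_G2}, \cref{it:mult_g2}) are the most direct: on $\W_2$ the two copies $v_{w,\pm}$ carry the two ``halves'' of the fibre over $E_w$ — one seeing $z\sim f$ (order $m_w$, but the relevant winding after the split is $m_w - l_w$) and one seeing $z\sim g^{-1}\cdot(\text{unit})$-type, i.e. $l_w$ — while $n_{w,+}=\sum_{w_a=w}m_a$ counts arrowheads of $\A_g$ meeting $E_w$; and for $a\in\A_{g,2}$ the $m_w$ disks being twisted $l_a$ times give $m_{a,i}=1$ with $v_{a,0}$ absorbing the twist as $m_{a,0} = -l_a$.

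The main obstacle I expect is \textbf{sign and orientation bookkeeping}, together with correctly tracking how the twisting operation of \cref{def:T_f_g} modifies the multiplicity system. Concretely: (1) the $\pm$ edges and the $\mp k_i$ self-intersections force one to carry the intersection-sign conventions of \cref{lem:edge_signs} and \cref{rem:diff} through every torus identification, and a sign error anywhere is invisible until it contradicts \cref{lem:top_ident}; (2) the twist $t_k(x,z) = \psi(x,x^kz)$ changes the homology class of a fibre by a fibre-class multiple, so the passage from the multiplicities on $M'_{f,g} = \partial\overline T_{f,g}$ to those on $M_{f,g} = \partial F_{f,g}$ must be computed via the glueing map $t_{l_a}$, and this is exactly what produces the $-l_a$ and $-\tilde m_a l_w$ offsets at the central vertices — getting the direction of the shift right (and not off by the wrong sign or by a factor of $\tilde m_a$ versus $m_a$) is delicate. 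My strategy to control this is to use \cref{lem:top_ident} as an internal consistency check: once the local fibre parametrizations are fixed, the identity $-b_w m_w + \sum_{e\in\E_w}\epsilon_e m_e = n_w$ must hold at every vertex of $G$ with the $b_w$, $\epsilon_e$ from \cref{def:constr}--\cref{def:bridges} and the candidate $m$'s, $n$'s from the theorem statement; verifying this at the central vertices $v_{a,0}$ (and at the $G_1$-vertices, where it reads $0\cdot 1 + \sum\epsilon_e m_e = 0$ and becomes \cref{eq:G1_selfint} divided by $d_{\Gamma'}$, simultaneously proving $-b_{\Gamma'}\in\Z$ as promised) both pins down the remaining signs and reproves integrality, so it is natural to organize the whole proof around these local computations followed by the \cref{lem:top_ident} checks.
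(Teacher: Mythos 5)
Your overall strategy --- verify the hypotheses of \cref{block:mult}, compute the multiplicities piece by piece as degrees of $z$ restricted to fibers of the projections $\pi_v$, read off the dual multiplicities from the vanishing locus of $z$, and use \cref{lem:top_ident} to handle the string vertices --- is essentially the paper's. (One packaging difference: the paper does not parametrize every intermediate fiber of the bamboos; it computes the multiplicities only at the ends and then determines $m_{a,i,\pm}$, $m_{e,i,\pm}$ as the unique solution of the negative definite linear system supplied by \cref{lem:top_ident}.)

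There is, however, a concrete error that would derail the execution. On the Milnor fiber $\{f+zg=\epsilon\}$ one has $z=(\epsilon-f)/g$, so $z^{-1}(0)\cap F_\Phi$ is the Milnor fiber of $f$ (the locus $\{f=\epsilon\}$), not a ``$\{g=0\}$-related locus'', and $z$ certainly does not ``vanish only along components of $\A_g$'': the dual multiplicities are supported where the link of $F_f$ meets the pieces, namely at $v_{a,1,+}$ for $a\in\A_{f,1}$ and at $v_{w,+}$ for $w\in\W_2$, exactly as the theorem records; your later (correct) remark that $n_{a,1,+}=m_a$ comes from the strict transform of $f$ contradicts this earlier identification. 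Relatedly, you have $T_\epsilon$ and $T'$ swapped: on the collar $T_\epsilon$ of $F'_f$ one has $f\approx\epsilon$ and $z\approx 0$, winding once around each meridian of $F_f$ (which is what gives $m_{\Gamma',i}=1$ and $m_{a,i}=1$), whereas it is on the small neighbourhood $T'$ of the exceptional divisor that $z\approx\epsilon/g$. The comparison of $|f|$ against $\epsilon$ on outer versus inner meridians is precisely the mechanism that splits $m_{w,+}=m_w-l_w$ from $m_{w,-}=l_w$ for $w\in\W_2$, so this cannot be dismissed as bookkeeping. Finally, the offsets $m_{a,0}=-\tilde m_a l_w$ and $m_{a,0}=-l_a$ are not produced by the twisting operation of \cref{def:T_f_g}: in the paper they come from the degree of $g$ (vanishing to order $l_w$ along $E_w$, resp.\ $l_a$ along $E_a$) on the relevant fiber, with $f-\epsilon$ contributing degree zero there.
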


\begin{rem}
Let $e,v_1$ be as in \cref{thm:mult}\cref{it:mult_bridge}. One proves
that, in fact, $m_{e,1,+} = m_{v_1} - l_{v_1}$ and $m_{e,1,-} = l_{v_1}$.
\end{rem}

\section{Examples} \label{s:examples}

\begin{example}
The singularity $T_{a,b,\infty}$ is the singularity at the origin
of the hypersurface given by $x^a + y^b + xyz = 0$.
In the case $b=2$, the boundary of the Milnor fiber has been
described in \cite{Nem_Szil}.
We take $f(x,y) = x^a + y^b$ and $g(x,y) = xy$.
We will assume $a,b$ satisfying $a\geq b \geq 2$ and $a>2$.
We claim
that the boundary of the Milnor fiber of this singularity is given by
the plumbing graph

\mynd{Tabs}{ht}
{A plumbing graph for the boundary of the Milnor fiber of the singularity
$T_{a,b,*}$, $x^a + y^b + xyz$.}
{im:Tabs} 

Let $\phi:V\to \C^2$ be the minimal resolution of the plane curve
$fg$ and let $\Gamma$ be its resolution graph.
Then $\Gamma$ is a 
string with two arrowheads corresponding to $g$, one on each end
of the string, as well as $d := \gcd(a,b)$ arrowheads corresponding to $f$.
Name the nonarrowhead vertices
of the graph $v_1, \ldots, v_s$ so that $v_i, v_{i+1}$
are adjacent. Let $-b_i$ be the selfintersection number associated with the
vertex $v_i$. There is a unique $j$ so that $-b_j = -1$.
The set $\A_f$ consists of $d$ arrowheads, each connected
to $v_j$, whereas $\A_g$ consists of two arrowheads, one connected to
$v_1$ and the other to $v_s$.
Write also $m_i, l_i$ for the multiplicities of $f$ and $g$
on $v_i$.

\emph{Claim:} We have $m_1 \geq l_1$ and
$m_s > l_s$ and $m_i > l_i$ for $i=2,\ldots,s-1$.

In fact, using \cite[Lemma 20.2]{EisNeu}, one finds $m_j = ab/d$ and
$l_j = a/d + b/d$. It follows from our assumptions that
$m_j - l_j > 0$. Now, define integers $r_i = m_i - l_i$ for $i=1,\ldots, s$
and $r_0 = r_{s+1} = -1$. We then have
\[
  r_{i-1} - b_i r_i + r_{i+1} = 0, \quad
  i = 1,\ldots, \hat j, \ldots, s.
\]
It follows easily that this sequence increases strictly from $r_0 = -1$
to $r_j = m_j-l_j$, and then decreases strictly from $r_j$ to $r_{s+1} = -1$.
Since these are integers, the claim follows.

We leave to the reader to show that the equality $m_i = l_i$
holds for $i=1$ or $i=s$ if and
only if $b=2$, the case already covered by
N\'emethi and Szil\'ard \cite{Nem_Szil}. This can be achieved by calculating
$m_i$ and $l_i$ explicitly using Lemma 20.2 of \cite{EisNeu}.

We start by showing how the above graph is obtained from
the output of the algorithm in the case when
$l_i > m_i$ for all $i$. Since $\W = \W_2$, the graph $G_2$ consists
of two strings, one of them identical to $\Gamma$, the other one having
Euler numbers with opposite signs and negative edges. In addition, we
have $\A_{g,2} = \{a_x, a_y\}$, two arrowhead vertices corresponding to
the strict transform of the factors $x$ and $y$ of $g$. As described
in \cref{rem:blow_down_g2}, the graph $\Gamma$ can be taken as these
two strings, connected on each end by vertices with Euler number
$m_x$ and $m_y$. These are the multiplicities of $f$ along the components
on the end of the string. It follows from \cite{EisNeu} that these
multiplicities are $a$ and $b$. Furthermore, the two strings blow down
(we can blow down the vertices one by one in the opposite order in
which they appear during the process of resolving $f$). Each string is
replaced by an edge, the first string by a positive edge, the second
one by a negative edge. Below, we explicate the case when $a = 7$
and $b=5$.

\mynd{5_7_res}{ht}
{A resolution graph of the plane curves $f(x,y) = x^7+y^5=0$
and $g(x,y) = xy$, along with their multiplicities.}
{im:5_7_res} 

\mynd{5_7_output}{ht}
{Output of the algorithm for $\Phi(x,y,z) = x^7 + y^5 + xyz$.}
{im:5_7_output} 

In the case when either $m_1 = l_1$ or $m_s = l_s$, the algorithm has,
in fact, the same output. We let it suffice to clarify this principle by
considering an example.
Take $a = 3$ and $b = 2$. A resolution graph $\Gamma$, decorated
with the pairs of multiplicities $(m_v,l_v)$ is shown in \cref{im:3_2_res}.

\mynd{3_2_res}{ht}
{A resolution graph of the plane curves $f(x,y) = x^3+y^2=0$ and
$g(x,y) = xy$, along with their multiplicities.}
{im:3_2_res} 
We see that $\W_2$ now only contains the vertex $v_2$, whereas
$v_1,v_3 \in \W_1$, each providing a connected component of $\Gamma_1$.
We order the vertices in \cref{im:3_2_res} in such a way that $b_1 = -2$
and $b_3 = -3$. Applying \cref{def:constr}\cref{it:constr_G1} to the
component $\Gamma'$ of $\Gamma_1$, containing only the vertex $v_1$,
as well as $\Gamma''$ containing only $v_3$,
we get
\[
  d_{\Gamma'} = 3, \quad
  g_{\Gamma'} = 0, \quad
  -b_{\Gamma'} = -1, \quad
  d_{\Gamma''} = 2, \quad
  g_{\Gamma''} = 0, \quad
  -b_{\Gamma''} = -1.
\]
We get five new vertices. The edges $e_1 = \{v_1,v_2\}$ and
$e_2 = \{v_2,v_3\}$ are
of the form described in \cref{def:constr}\cref{it:constr_bridge}. The five
new vertices are connected to $v_{e_1,0}$ and $v_{e_2,0}$ to obtain the
graph in \cref{im:3_2_output}, which also shows the multiplicities of
the function $z$.
After blowing down, we obtain \cref{im:Tabs}.
\mynd{3_2_output}{ht}
{Output of the algorithm for $\Phi(x,y,z) = x^3 + y^2 + xyz$.}
{im:3_2_output} 

\end{example}

\begin{example}
Consider the plane curves
\[
\begin{split}
  f(x,y) &= (x^2 + \lambda_1 y^3)  ( (x^2+y^3)^2 + \mu_1 x^5)^2, \\
  g(x,y) &= (x^2 + \lambda_2 y^3)^3( (x^2+y^3)^2 + \mu_2 x^5),
\end{split}
\]
where $\lambda_1,\lambda_2 \in \C\setminus\{0,1\}$ are distinct and
$\mu_1,\mu_2 \in \C\setminus\{0\}$ are distinct.
The resolution graph $\Gamma$, decorated with the multiplicities $m$ and $l$
is given in \cref{im:omni_res}. The set $\W_1$ consists of the first three
vertices appearing during the resolution process, corresponding to the
first Puiseux pair, where $f$ and $g$ have equal multiplicities. But, as
$f$ has more components with two Puiseux pairs, compared with $g$,
the multiplicities of $f$ are higher along the second part of the resolution
process, that is, the three vertices appearing last.

\mynd{omni_res}{ht}
{Plane curves whose branches have $1$ and $2$ Puiseux pairs.}
{im:omni_res} 

The graph $\Gamma_1$ is connected, and the invariants from
\cref{def:constr}\cref{it:constr_G1} are easily computed using
\cref{eq:G1_genus} and \cref{eq:G1_selfint}:
\[
\begin{array}{rclrcl}
  d_{\Gamma_1} &=& \gcd\{ 10, 30, 15, 1, 34 \}, &
  d_{\Gamma_1} &=& 1, \\
  1\cdot(2-2g_{\Gamma_1}) &=& 1\cdot 15 + 1\cdot 10 + (-2)\cdot 30 + 1+2,&
  g_{\Gamma_1} &=& 17,\\
  1\cdot(-b_{\Gamma_1}) &=& (30\cdot 0 - 1\cdot 30) + (30\cdot 32-30\cdot 34),&
  -b_{\Gamma_1} &=& -90.
\end{array}
\]
Thus, we obtain a single vertex $v_{\Gamma_1}$ with genus $17$ and
Euler number $-90$. Furthermore, $m_{\Gamma_1} = 1$.

The set $\A_{f,1}$ contains one element. Using the notation in
\cref{def:constr}\cref{it:constr_f1}, we have $m_a = 1$, $m_w = 30$
and $l_w = 30$. 
We have $m_a/m_w = [1,2,\ldots,2]$ where the number of $2$'s is $29$.
Therefore, $61$ new vertices are created. The vertex
$v_{a,0}$ is connected to $v_{\Gamma_1}$ by $m_e = 1$ edge with a
negative sign. Furthermore,
\[
  m_{a,0} = -30, \quad m_{a,1,+} = 0,\quad
  m_{a,1,-} = 30, \quad n_{a,1,+} = 1.
\]
Note that the vertices $v_{a,i,\pm}$ blow down, leaving only
the vertex $v_{a,0}$.

There is one edge $e$ connecting $\W_1$ and $\W_2$. In the notation of
\cref{def:constr}\cref{it:constr_bridge}, we have $m_e = 2$ and
\[
\begin{array}{lcrlcrlcr}
  m_{v_1}     &=& 30,\quad\quad
  \tilde m_1  &=& 15,\quad\quad
  l_{v_1}     &=& 30, \\
  m_{v_2}     &=& 34,\quad\quad
  \tilde m_2  &=& 17,\quad\quad
  l_{v_2}     &=& 32,
\end{array}
\]
We have $17/15 = [2, 2, 2, 2, 2, 2, 2, 3]$, hence the creation of
$17$ new vertices. The vertices $v_{e,0}$ and $v_{\Gamma_1}$ are connected
by two edges with negative sign. Furthermore, we get
\[
  m_{e,0} = -30, \quad
  m_{e,1,+} = 0, \quad
  m_{e,1,-} = 30.
\]

The set $\W_2$ contains three elements, inducing six new vertices with
genus $0$ and Euler number $\mp 3, \mp 1, \mp 2$. The $\mp3$ curves are
connected with the vertices $v_{e,1,\pm}$ from the previous construction.

The set $\A_{g,2}$ contains one vertex, say $a \in \A_{g,2}$,
as in \cref{def:constr}\cref{it:constr_g2}. Set also $w = w_a$.
Since $m_w = 72$ and $l_a = 1$, we get a vertex $v_{a,0}$ with genus
$0$ and Euler number $0$, connected to the vertices $v_{a,1},\ldots,v_{a,72}$
with genus $0$ and Euler number $-1$, via a negative edge.
We have $m_{a,0} = -1$ and $m_{a,i} = (1)$ for $1\leq i \leq 72$.

\mynd{omni_output}{ht}
{The dotted lines denote either a string of $(-2)$-pieces connected
by positive edges or $2$-curves connected by negative edges.}
{im:omni_output} 

\end{example}

\section{Proofs} \label{s:proofs}

To prove \cref{thm:boundary,thm:mult}, we start by defining pieces
$M_v \subset M$ and projections $\pi_v:M_v \to \Sigma_v$ for all 
vertices $v\in \V$, using the description in \cref{thm:whole_fiber}.
From the construction, it will be clear that $M = \cup_{v\in \V} M_v$, and
that individual pieces intersect according to the edges of $G$. 
Finally, we verify the formulas for genera and
selfintersection numbers. In fact, it will be clear that the genus decoration
is zero, except for in the case of $v_{\Gamma'}$, where an argument
similar to A'Campo's formula \cite{ACampo} is used.
Similarly as in \cite{Nem_cyclic,Nem_Szil}, nontrivial
Euler numbers are determined using the multiplicities of
$z$ and \cref{lem:top_ident}. We note that the proof of \cref{thm:mult},
can be carried out as soon as the projections $\pi_v$ are defined. In
particular, this proof does not use the Euler numbers, which are computed
using the multiplicities of $z$.

\begin{proof}[Proof of \cref{thm:boundary}]
We start by providing sets $M_v$ for each vertex of the graph $G$. We
then prove that these
pieces provide a plumbing structure on $M$ with the plumbing graph $G$.
\begin{blist}

\item \label{it:pf_G1}
Let $X'_1$ be the closure of the set
\[
  \overline T_{f,g} \cap T_1 \setminus (T_{f,1} \cup T_2).
\]
By construction, this is a closed tubular neighbourhood of
\[
  F_{f,1} := F_f \cap T_1 \setminus (T_{f,1} \cup T_2),
\]
in particular, we have a disk bundle $X'_1 \to F_{f,1}$. We can assume
that the intersection of this disk bundle with the divisor associated
with $g$ is a set of disks. Let $X_1$ be the four manifold
obtained from $X'_1$ by twisting along these disks as in
\cref{def:T_f_g} and let $M_1 \subset \partial X_1$ be the
associated $S^1$ bundle. It is then clear that we have
$M_1 \subset M_{f,g}$, that the boundary of $M_1$ consists of tori
and that $M_1$ is in a natural way an $S^1$ bundle over $F_{f,1}$.

Let $\Gamma'\subset \Gamma_1$ be a component as in
\cref{def:constr}\cref{it:constr_G1}. Setting
\[
  X'_{\Gamma'} = X'_1 \cap \bigcup_{v\in\V(\Gamma')} T_v,
\]
we obtain correspondingly $X_{\Gamma'} \subset X_1$ and
$M_{\Gamma'} \subset M_1$. This way, $M_{\Gamma'}$ is an $S^1$ bundle
over the surface $F_{f,\Gamma'} = F_f \cap X'_{\Gamma'}$.

Firstly, we note that the number of connected components of $F_{f,\Gamma'}$ is
precisely $d_{\Gamma'}$ and that, furthermore, the monodromy permutes these
components cyclically.
This follows from Proposition 2.20 of \cite{Nem_cyclic}, see also 2.21 of
the same article.

Secondly, the genus of the components of $F_{f,\Gamma'}$ is
$g_{\Gamma'}$, satisfying \cref{eq:G1_genus}. This follows from
a small generalization of
A'Campo's formula \cite{ACampo} which gives
\[
  \chi(F_{f,\Gamma'}) = \sum_{v\in\V(\Gamma')} m_v (2-\hat\delta_v).
\]
What is more, $F_{f,\Gamma'}$ has $m_e$ boundary components close to
the intersection of $E_v$ and $E_{w_e}$ for $e\in \hat\E(\Gamma')$.
Thus, $F_{f,\Gamma'}$ has a total of
$\sum_{e\in\hat\E(\Gamma')} m_e$ boundary components.

The formula \cref{eq:G1_selfint} is verified below.

\item \label{it:pf_f1}

Let $a\in A_{1,f}$ and set $w = w_a$. Define
$M_a = \partial \overline T_{f,g} \cap \overline T_a$
We have coordinates $u,v$ on $T_a$ so that
$T_a \cong \set{(u,v)}{|u| \leq 1, \, |v| \leq 1}$ and so that
$E_a \cap T_a$ and $E_w \cap T_a$ are the vanishing sets of $u$ and $v$,
respectively.
We can then write $M_a = M_{a,+} \cup M_{a,-} \cup M_{a,0}$ where for certain
$0 < \eta \ll \epsilon \ll 1$, we have
\[
  M_{a,+} = \set{(u,v)}{|v| = 1     ,\,|u| \leq 1},\quad
  M_{a,-} = \set{(u,v)}{|v| = \eta  ,\,|u| \leq 1}
\]
and $M_{a,0}$ is defined by setting
\[
  M'_{a,0} = \set{(u,v)}{|u| = 1,\, \eta \leq |v| \leq 1},\quad
  M_{a,0} = M'_{a,0} \setminus N,
\]
where $N$ is an $\epsilon$ neighbourhood around $F_f \cap M_{a,0}'$.
The projection of this picture via $(u,v) \mapsto (|u|,|v|)$ is shown
in \cref{im:scheme_f1}.
\mynd{scheme_f1}{ht}
{A diagram showing what happens near $E_a \cap E_{w_a}$.}
{im:scheme_f1} 
We can assume that in the coordinates $u,v$, we can write
$f|_{T_a}(u,v) = u^{m_a} v^{m_w}$. Define
$\tilde m_a = m_a / m_e$ and $\tilde m_w = m_w / m_e$.
We find
\[
  F_f \cap M'_{a,0}
    = \coprod_{j=0}^{m_e-1}
      \set{(e^{(-t+ j/m_a)\tilde m_w 2\pi i}, e^{t \tilde m_a 2\pi i})}
          {t \in [0,1]}
\]
In fact, we have an $S^1$ bundle projection
$\pi'_{a,0}$ mapping $M'_{a,0}$ to an annulus by the formula
$\pi'_{a,0}(u,v) = u^{\tilde m_a} v^{\tilde m_v}$. This way,
$F_f \cup {M'}_a^0$ consists of $m_e$ fibers of $\pi'_{0,a}$.
In particular, we can assume that $\pi'_{a,0}$ restricts to an
$S^1$ bundle $\pi_{0,a} = \pi'_{0,a}|_{M_{0,a}}$.
We orient the fibers so that one of them is parametrized as
$t \mapsto (t^{-\tilde m_w}, t^{\tilde m_a})$, which induces an orientation
on the target space of $\pi_{0,a}$.

By \cref{lem:ncf_bamboo}, the manifold $M_{a,+}$ can be given as a plumbed
manifold with plumbing graph as in \cref{im:Bamboo}, where
$[k_1,\ldots,k_s] = m_a / m_w$, so that the section corresponding to the
arrowhead to the right can be chosen to coincide with a fiber of $M_{a,0}$,
with the opposite orientation.
Furthermore, we have an orientation reversing
diffeomorphism $M_{a,+} \to M_{a,-}$ given by
$(u,v) \mapsto (u,\eta v)$. This way, we see $M_{a,-}$ as a plumbed
manifold with the same plumbing graph, modified by changing signs on
all selfintersection numbers as well as edges.

At this point, we have shown that the manifold $M_a$ is a plumbed manifold
with plumbing graph $G_a$, with $m_e$ dashed arrows added to $v_{a,0}$,
except we did not specify a section corresponding to these arrowheads.
Furthermore, (and this cannot be done without the sections) we have not
determined the Euler number associated with $v_{a,0}$.
Since $M_{0,a}$ is obtained by removing a tubular
neighbourhood around a fiber of the projection $\pi'_{0,a}$, we can choose
as a section a meridian around this fiber.
Note that this section is exactly
a fiber of the projection $\pi_{\Gamma',i}$ for a suitable
$1\leq i \leq d_{\Gamma'}$, where $w$ is a vertex of the component $\Gamma'$
of $\Gamma_1$. But we can be more specific. Let $\psi:S^1 \to M_{a,+}$
be a parametrization of a fiber in the boundary component of $M_{a,s,+}$.
This induces a map $S_1\times [\eta,1] \to {M'}_{a,0}$ which is a global section
to the fibration of $M'_{a,0}$, restricting to a global section to
the fibration of $M_{a,0}$, which again restricts to a parametrization of
the fibers of $M_{a,s,\pm}$, as well as a parametrization of a meridian around
$F_f \cap M_{a,0}$. This shows that with this choice of sections on the
boundary, the Euler number of the bundle $M_{a,0}$ is $0$.

Finally, we note that $M_{a,0}$ intersects $M_{\Gamma'}$ in exactly
$m_e$ tori, and that the number of these tori in each component of
$M_{\Gamma'}$ is the same. It follows from the construction that
in each of these tori, a fiber of $M_{a,0}$ and a fiber from $M_{\Gamma'}$
form an integral basis on homology. Furthermore, one verifies
that an oriented fiber of $M_{a,0}$ in such a torus is an oriented
section of $M_{\Gamma'}$. This can be seen by noting that both
wind around $E_a$ with multiplicity $-\tilde m_a$.
Therefore, these tori yield edges
with a negative sign, by \cref{lem:edge_signs}. These are the
edges defined in \cref{def:bridges}\cref{it:bridges_f1}.

\item \label{it:pf_bridge}
Let $e$ and $v_i \in \W_i$ be as in \ref{def:constr}\cref{it:constr_bridge}.
Let $D$ be a disc in $E_{v_1}$ with center the intersection point
of $E_{v_1}$ and $E_{v_2}$ corresponding to $e$
which is a slightly bigger than the corresponding disk in
$E_{v_1} \cap \overline T_{v_2}$. We can add the preimage of $D$
in $\overline T_{v_1} \setminus T'_{v_2}$ to $\overline T_{f,g}$ without
changing its diffeomorphism type. From here, the proof follows similarly
as in the previous case. A schematic picture is shown in
\cref{im:scheme_bridge}.
\mynd{scheme_bridge}{hbt}
{A diagram showing what happens near $E_{v_1} \cap E_{v_2}$.}
{im:scheme_bridge} 
\item \label{it:pf_G2}
Let $w \in \W_2$ and define $M_w$ as the closeure of
$\overline T_w \setminus \cup_v \overline T_v$, where the union $\cup_v$
ranges over $v\in \W \cup \A_g\setminus\{w\}$. It follows from construction that
$M_w$ consists of two copies of an $S^1$ bundle over the surface $E_w$ with a
disk removed for each neighbour in $\W \cup \A_g\setminus\{w\}$.
Indeed, these are
the corresponding subsets of the boundaries of $T_w$ and $T'_w$ (recall
\cref{def:neighbourhoods}). Write $M_{w,+}$ and $M_{w,-}$ for the outer
and inner components. These components will correspond to the vertices
$v_{w,\pm}$.

These fibrations extends canonically over
the disks removed from $E_w$
by taking $T_w$ and $T'_w$, and we can take a meridians around
central fibers as the trivializing section on the boundary.
It follows immediately that the two $S^1$ bundles have relative Euler
numbers $-b_{w,\pm} = \mp b_w$. Furthermore, since $E_w$ is a rational
curve, the two vertices $v_{w,\pm}$ have associated genus $0$.

As both components of $M_w$ are boundaries of similar tubular neighbourhoods,
they can be identified, but the inner one, i.e. the boundary of
$T'_w$ has its orientation reversed. We will consider this part as a fibration
over the same base as the outer component. Therefore, a fiber in
$M_{w,+}$ is a meridian around $E_w$, whereas a fiber in $M_{w,-}$ is a
(relatively small) meridian around $E_w$ with the orientation reversed.

If $w,w'\in\W_2$ are joined by an edge, it follows easily that the two
components corresponding to $w$ intersect with those of $w'$
in the same way as prescribed by the resolution graph $\Gamma$.

\item \label{it:pf_g2}
Finally, we describe what happens close to a component of the strict
transform of $g$ corresponding to $a\in\A_{g,2}$.

Let $M'_a = M' \cap \overline T_a$ and $M_a$ the corresponding twisted
subset of $M$. Let $M_{a,0} = M \cap \partial T_a$. It follows from
construction that $M_a$ fibers by a map $\pi_{a,0}$
over the disk $E_a$ with $m_a + 1$
smaller disks removed, one corresponding to $T'_{w_a}$, and
$m_a$ of them corresponding to $T_\epsilon$.
We orient the fiber to coincide with that of a meridian around $E_a$.
This chooses an orientation of the base space
of $\pi_{a,0}$, the
opposite of the standard one on $E_a$.
This bundle is
trivialized in a similar way as in \cref{it:pf_f1},
yielding Euler number $-b_{a,0} = 0$.
It is also clear that $g_{a,0} = 0$.

The closure of $M_a \setminus M_{a,0}$ is an $S^1$ bundle over
$F_f \cap T_a \cong \amalg_{m_w} \overline D$. This gives $m_w$ pieces
$M_{a,1},\ldots, M_{a,m_w}$, ordered arbitrarily.
The fibers are meridians around $F_f$.

We see that $M_a$ is a plumbed manifold with plumbing graph $G_a$ (with
some dashed arrows added, corresponding to the boundary).
Using \cref{lem:edge_signs}, we see that the edges between
$v_{a,0}$ and $v_{a,i}$ have a negative sign, whereas the edges
connecting $v_{a,0}$ and $v_{w,\pm}$ are positive.
\end{blist}

In \crefrange{it:pf_G1}{it:pf_g2} above we have assigned subsets
$M_v \subset M$ to each vertex $v$ of the graph $G$ constructed in
\cref{def:constr,def:bridges}.
It is clear that each piece is connected and that each boundary components
of any of the pieces are tori. Furthermore, the components of intersection
of two pieces correspond to the edges connecting the corresponding vertices.
The base space of each fibration is a surface
of the genus specified, or zero otherwise.

The only part which remains to prove is \cref{eq:G1_selfint}. 
But this follows immediately from \cref{lem:top_ident} and
\cref{thm:mult}.
\end{proof}

\begin{proof}[Proof of \cref{thm:mult}]

\begin{blist}

\item \label{it:pfm_G1}
Let $\Gamma'$ be as in \cref{def:constr}\cref{it:constr_G1}.
It follows from the proof in \cite{Sigurdsson2016} that $|z|$ is 
constant on $M_{\Gamma',i}$ (and nonzero). It follows that
the dual multiplicities $n_{\Gamma',i}$ vanish.
A fiber in $M_{\Gamma',i}$ is an oriented meridian around $F_f$.
The restriction of $z = (f-\epsilon)/g$ to such
a fiber is a map of degree $1$, thus $m_{\Gamma',i} = 1$.

\item
Let $a\in \A_{1,f}$ as in \cref{def:constr}\cref{it:constr_f1}.
We start by observing that the vanishing set of the function
$z = (f-\epsilon)/g$ is contained in the piece $M_{a,1,+}$.
The vanishing set of $z$ is the Milnor fiber $F_f$ of $f$. The intersection
$F_f \cap \partial T_a$ consists of two parts, contained in neighbourhoods
around $E_w\cap\partial T_a$ and $E_a\cap\partial T_w$. By construction,
the former is not included in $M_a$. The latter is homologous to
a meridian around $E_w$ with multiplicity $m_w$. We can take this meridian
as $E_a \cap \partial T_a$.
Therefore, the dual multiplicities vanish on all vertices of $G_a$
except for $v_{a,1,+}$ and we have $n_{a,1,+} = m_a$.

It follows from the explicit calculations in \cref{it:pf_f1}
in the proof of \cref{thm:boundary} that the restriction of
$f-\epsilon$ to a fiber of $M_{a,0}$ has degree zero.
Indeed, in the coordinates $u,v$ introduced there for the polydisk
$T_a$, we have $f|_{T_a} = u^{m_a} v^{m_w}$ and a fiber
in $M_{a,0}$ is parametrized in these coordinates by
$[0,1]\to T_a$,
$t \mapsto (e^{-t \tilde m_w 2\pi i}, e^{t\tilde m_a 2\pi i})$.
Since $g$ vanishes with order $l_w$ along $E_w$, and does not vanish along
$E_a$, it follows that the multiplicity $m_{a,0}$ equals
$-l_w \tilde m_a$.

Now, the sequence $m_{a,i,+}$, $i=1,\ldots, s$ satisfies
\begin{equation} \label{eq:string}
\begin{array}{lclclcll}
              &-&b_{a,1,+} m_{a,1,+} &+& m_{a,2,+}   &=& n_{a,+,1},& \\
  m_{a,i-1,+} &-&b_{a,i,+} m_{a,i,+} &+& m_{a,i+1,+} &=& 0,  &
    i=2,\ldots, s-1, \\
  m_{a,s-1,+} &-&b_{a,s,+} m_{a,s,+} & &             &=& m_{a,0}. &
\end{array}
\end{equation}
The same equations are satisfied by the sequence
$(l_w - \tilde m_w) \mu_i - m_a \tilde \mu_i$, as is easily checked.
It follows that the two sequences coincide, since the matrix associated
with this system of linear equations is negative definite. A similar argument
proves the statement for the multiplicities $m_{a,i,-}$.

\item
Let $e$ be an edge in $G$ connecting $v_1$ and $v_2$ as in
\cref{def:constr}\cref{it:constr_bridge}.
We start by observing that $z$ does not vanish on $M_e$, and so all dual
multiplicities are vanish for the vertices of $G_e$.

Similarly as above,
we find that the map $f-\epsilon$ restricted to a fiber
$C_{e,0} \subset M_{e,0}$ has degree zero, and $g$ has degree
$\tilde m_2 l_{v_1} - \tilde m_1 l_{v_2}$. It follows that
$m_{e,0} = \tilde m_1 l_{v_2} - \tilde m_2 l_{v_1}$.
Now, similar reasoning as above determines the multiplicities
$m_{e,i,\pm}$. Namely, we have linear equations
\begin{equation} \label{eq:string2}
\begin{array}{lclclcll}
                &-&b_{e,1,\pm} m_{e,1,\pm} &+& m_{e,2,\pm}   &=& m_{e,0},& \\
  m_{e,i-1,\pm} &-&b_{e,i,\pm} m_{e,i,\pm} &+& m_{e,i+1,\pm} &=& 0,  &
    i=2,\ldots, s-1, \\
  m_{e,s-1,\pm} &-&b_{e,s,\pm} m_{e,s,\pm} & &             &=& m_{w_2,\pm}. &
\end{array}
\end{equation}
The result follows as soon as we determine the multiplicities
$m_{w_2,\pm}$:

\item
Let $w\in\W_2$. Above, we have determined that $C_{w,-}$ is a
meridian around $E_w$, small with respect to $\epsilon$ and having the
opposite orientation than the standard meridian. It follows that
$z = (f-\epsilon)/g$ restricted to $C_{w,-}$ has degree $l_w$, i.e.
$m_{w,-} = l_w$. Furthermore, $z$ does not vanish on $M_{w,-}$, thus
$n_{w,0} = 0$. 

On the other hand, $C_{w,+}$ is an oriented meridian around $E_w$, with
respect to which $\epsilon$ is chosen small. It follws that
$m_{w,+} = m_w - l_w$. Furthermore, the vanishing set of $z$ in $M_{w,+}$
is homologous to the strict transform of $f$, with multiplicities. Therefore,
each $a\in\A_{f,2}$ contributes $m_a$ to $n_{w,+}$, resulting in the
sum given.

\item
Let $a\in \A_{g,2}$ and set $w = w_a$.
Similarly as in \cref{it:pfm_G1}, we find that $z$ does not vanish on
$M_a$. Therefore, $n_{a,i} = 0$ for $i=0,\ldots, m_w$.
In \cref{it:pf_g2} of the proof of
\cref{thm:boundary} we found that $C_{a,0}$ is a meridian around $E_a$.
We can assume that the restriction of $f-\epsilon$
to such a meridian has degree $0$. It is also clear that $g$
restricts to a degree $-l_a$ map on such a fiber. It follows that
$m_{a,0} = -l_a$. 

For $i=1,\ldots, m_w$, the fiber $C_{a,i}$ is a small meridian around
$F_f$. It follows that $m_{a,i} = 1$.
\qedhere
\end{blist}
\end{proof}

\bibliography{bibliography}

\end{document}